\DeclareMathAlphabet{\mathcalligra}{T1}{calligra}{m}{n}
\newtheorem{definition}{Definition}[section]
\newtheorem{lemma}{Lemma}[section]
\newtheorem{theorem}{Theorem}[section]
\newtheorem{remark}{Remark}[section]
\newtheorem{proposition}{Proposition}[section]
\numberwithin{equation}{section}
\newcommand{\ad}{{\rm ad}}
\def\ii{\mathrm i}
\def\Z{\mathbb Z}
\def\N{\mathbb N}
\def\cN{\mathcal N}
\def\R{\mathbb R}
\def\C{\mathbb C}
\def\e{\varepsilon}
\def\T{\mathbb T}
\def\SS{\mathcal S}
\def\OO{\mathcal O}
\def\HH{\mathcal H}
\def\I{\mathcal I}
\def\a{\alpha}
\def\b{\beta}
\def\s{\sigma}
\def\vt{\vartheta}
\def\e{{\varepsilon}}
\def\fr{\mathtt{r}}
\def\muu{{\omega}}
\def\laa{{\lambda}}
\newcommand{\re}[1]{\cP\!_{#1}}
\newcommand{\al}{{\alpha}}
\newcommand{\bt}{{\beta}}
\newcommand{\Psii}{{\Phi}}
\newcommand{\cA}{{\mathcal A}}
\newcommand{\cF}{{\mathcal F}}
\newcommand{\cH}{{\mathcal H}}
\newcommand{\cO}{{\mathcal O}}
\newcommand{\cP}{{\mathcal P}}
\newcommand{\cR}{{\mathcal R}}
\newcommand{\cS}{{\mathcal S}}
\newcommand{\cT}{{\mathcal T}}
\newcommand{\abs}[1]{\big| #1 \big|}
\newcommand{\ti}{{\mathtt{i}}}
\newcommand{\td}{{\mathtt{d}}}
\def\be{\begin{equation}}
\def\ee{\end{equation}}
\title{KAM for beating solutions of the quintic NLS}
\author{E. Haus*, M. Procesi**
\vspace{2mm} 
\\ \small 
$^*$ Dipartimento di Matematica e Applicazioni ``R. Caccioppoli''\\ \small Universit\`a degli Studi di Napoli ``Federico II'', Napoli, I-80126, Italy
\vspace{2mm}
\\ \small 
$^{**}$ Dipartimento di Matematica ``G. Castelnuovo''\\ \small Universit\`a di Roma ``La Sapienza'', Roma, I-00185, Italy
\vspace{2mm}
\\ \small 
E-mail:  \texttt{emanuele.haus@unina.it}, \texttt{mprocesi@mat.uniroma1.it}}
\begin{document}
\maketitle
\abstract{We consider the  nonlinear Schr\"odinger equation of degree five on the circle $\mathbb S^1= \R / 2\pi$. We prove the existence of quasi-periodic solutions which bifurcate from ``resonant'' solutions (studied in \cite{GT}) of the system obtained by truncating the Hamiltonian after one step of Birkhoff normal form, exhibiting recurrent exchange of energy between some Fourier modes. The existence of these quasi-periodic solutions is a purely nonlinear effect.}

\section{Introduction}

We consider the quintic  NLS on the circle
\begin{equation}\label{nls}
-i\partial_tu+\partial_{xx} u=\sigma |u|^4u\ , \qquad\qquad u=u(t,x) \qquad (t,x)\in\mathbb R \times \mathbb T \qquad \mathbb T:= \mathbb R / (2\pi)
\end{equation}
where $\s=\pm 1$.  This  is an infinite dimensional dynamical system with Hamiltonian
\begin{equation}\label{nn}
\HH=\int_{\mathbb{T}}|\nabla u|^2+\frac{\s}{3}\int_{\mathbb{T}}|u|^6
\end{equation}
having the mass (the $L^2$ norm) and the momentum
\begin{equation}
L=\int_{\mathbb{T}^2}|u|^2\ , \quad M= \int_{\mathbb{T}^2} {\rm Im }(u\cdot \nabla u) 
\end{equation}
as constants of motion.
Classical results imply that, for small values of the mass, equation \eqref{nls} is globally well-posed in $H^s$ with $s\geq1$. This is a consequence of the local well posedness result in \cite{Bou93}, combined with the conservation law \eqref{nn} (if the mass is small, the Hamiltonian controls the $H^1$-norm also for $\s=-1$, via the Gagliardo-Nirenberg inequality). However, one expects solutions of \eqref{nls} to exhibit a rich variety of qualitative behaviors.
Even close to zero, very little is known on the qualitative structure of typical solutions and the literature is confined to the study of special solutions which exhibit interesting features.
A fruitful approach  is to take the dynamical systems point of view and  to apply the powerful tools of {\em singular perturbation theory}, such as KAM theory, Birkhoff Normal Form, Arnold diffusion, first developed in order to study finite-dimensional systems.

A first elementary feature is that NLS equation has an elliptic fixed point at $u=0$. However such fixed point is  completely resonant  (i.e. all the linear solutions are periodic). 
It is well known, already in a finite dimensional setting, that such resonances may produce hyperbolic  tori, secondary tori  and energy transfer phenomena where the solutions of the non--linear system differ drastically from the ones of the linear system. 

The dynamical systems approach was first applied  in the early '90s to model PDEs  on $[0,\pi]$ with Dirichlet  boundary conditions in order to prove existence of quasi-periodic solutions and finite dimensional reducible KAM tori (see Definition \ref{def.qp} and \ref{reducible}). Among the vast literature we mention \cite{W1}, \cite{Kuk87}, \cite{KP} and \cite{Bou94},\cite{CW}, \cite{CY} for the case of periodic boundary conditions.  We note that the above papers all take advantage of some simplifying assumptions and hence they do not apply to our setting.  The first results on reducible KAM tori for NLS equations were proved in \cite{EK} for equations with external parameters, see also \cite{PX}.  Reducible KAM tori for \eqref{nls} were then proved in \cite{LY} and \cite{GY}, see also \cite{PP},\cite{PP16} and \cite{V}. Differently from the (integrable) cubic case, it can be seen that for appropriate choices of the initial data such solutions have a finite number of  linearly unstable directions. Then it is quite natural to wonder whether one may prove the existence of an orbit which starts close to the unstable manifold of one  solution and in very long time  arrives close to the stable manifold of a different solution, thus  exhibiting a large drift in the action variables.
This approach was first proposed in \cite{CKSTT} in order to study the growth of Sobolev norms for the cubic NLS on $\T^2$ (see \cite{GK} for an estimate of the time) and then generalized to any analytic NLS on $\T^2$ in \cite{HP} and \cite{GHP}. Unfortunately the approach proposed in this papers fails for equations on the circle such as \eqref{nls}, see Appendix C of \cite{HP} for a discussion of the problem. The main  point is that even if it is still true that unstable solutions exist one is not able to prove full energy transfer.

In this paper, we prove existence of quasi-periodic solutions of \eqref{nls} which exhibit {\em recurrent energy exchange} between some Fourier modes. These solutions are genuinely non-linear: they do not bifurcate from linear solutions, which do not feature any transfer of energy between Fourier modes. Moreover, these quasi-periodic solutions correspond to \emph{secondary} KAM tori (see for instance \cite{BC} for a discussion about secondary KAM tori in the finite-dimensional context), 
 since they are 4-dimensional, but homotopically equivalent to 3-dimensional tori in the phase space. 
To the best of our knowledge, this is the first result establishing existence of secondary KAM tori for PDEs.
%
%
\paragraph{Main results and strategy of the paper.}
We start by defining the main objects on which our result relies.
\begin{definition}\label{def.qp}
Given a rationally independent\footnote{i.e. a vector $\omega\in \R^n$ such that for all $k\in \Z^n\setminus \{0\}$ one has $\omega\cdot k\neq 0$.}  $\omega\in \R^n$, a quasi-periodic solution of frequency $\omega$ is a torus embedding $\mathfrak I: \T^n\ni\varphi \mapsto u(\varphi, x)$ 
such that $u(\omega t, x)$ is a classical solution for \eqref{nls}.
\end{definition}
 \begin{definition}[reducible KAM torus] \label{reducible}
 A quasi-periodic solution of frequency $\omega$  is a  reducible KAM torus if  (in appropriate complex symplectic variables $\varphi,y,z,\bar z$) it is expressed  by   the equations $y=z=0$ and moreover the associated Hamiltonian vector field  $X_H$  restricted to the torus  is $\sum_{i=1}^n\omega(\xi)\partial_{x_i}$,  while $X_H$  linearized at the torus is block-diagonal in the normal variables with $x$-independent block matrices   of uniformly bounded dimension.
\end{definition}Having fixed our notation, the main result of the paper is the following.
\begin{theorem}\label{thm:main}
There exist $\varepsilon_0>0$ and a compact domain $\OO_0\subset\R^4$ such that, for all $0<\varepsilon<\varepsilon_0$, there exists $\OO_\varepsilon \subset \OO_0$, satisfying $|\OO_\varepsilon|/|\OO_0|\to1$ as $\varepsilon\to0$, and a locally invertible Lipschitz mapping $\muu^{\infty}:\OO_\e\to\R^4$,  with the following property.
Fix $k\in\Z$ and let $\mathcal S:=\{-2,-1,1,2\}$. For all $\xi=(\xi_0,\xi_1,\xi_2,\xi_3)\in\OO_\varepsilon$, equation \eqref{nls} admits a quasi-periodic
solution\footnote{By the notation $O(\e^{2/3^{-}})$ in equation \eqref{la soluzione}, we denote a remainder $\mathfrak r_{\e,k}(\xi;t,x)$ belonging to the space $\ell^{a,p}$ introduced in \eqref{scale} for some $a>0,p>1/2$ fixed once and for all, satisfying $\| \mathfrak r_{\e,k} \|_{a,p} \leq C_\delta \e^{\frac23 - \delta}$ for all $\delta>0$.}
\begin{equation}\label{la soluzione}
u_{\e,k}(\xi;t,x)={\varepsilon}^{1/4} \Big(\sum_{j\in \mathcal S} v_j(\xi;\muu^\infty(\xi)t)e^{i (j+k)x} e^{i (jk  +k^2 )t} + O(\e^{2/3^{-}})\Big)
\end{equation}
of frequency $\muu^\infty(\xi)=(0,1,1,4)+  O(\e)$, such that the $v_j$ satisfy
\begin{align}
|v_j(\xi;\varphi_0,\varphi_1,\varphi_2,\varphi_3)|^2 = & \ \mathtt f_j (\xi;\varphi_0) \notag \\
\mathtt f_{-2}(\xi;0)\leq \frac12 \quad \mathtt f_{1}(\xi;0)\leq 1 \qquad & \ \mathtt f_{-1}(\xi;0)\geq 3 \quad \mathtt f_{2}(\xi;0)\geq \frac32 \label{battimenti davvero} \\
\mathtt f_{-2}(\xi;\pi)\geq \frac32 \quad \mathtt f_{1}(\xi;\pi)\geq 3 \qquad & \ \mathtt f_{-1}(\xi;\pi)\leq 1 \quad \mathtt f_{2}(\xi;\pi)\leq \frac12 \ . \notag
\end{align}
Moreover, for all $\varphi_0\in\T,\xi\in\OO_\e$ one has
\begin{equation}\label{xi123}
\begin{cases}
\mathtt f_1(\xi;\varphi_0) + 2 \mathtt f_2(\xi;\varphi_0) = \xi_1 \\
\mathtt f_{-1}(\xi;\varphi_0) - 2 \mathtt f_2(\xi;\varphi_0) = \xi_2 \\
\mathtt f_{-2}(\xi;\varphi_0) + \mathtt f_2(\xi;\varphi_0) = \xi_3
\end{cases}
\end{equation}
while
\begin{equation}\label{xi0}
\mathtt f_{2}(\cdot,\xi_1,\xi_2,\xi_3;0) - \mathtt f_{2}(\cdot,\xi_1,\xi_2,\xi_3;\pi) \text{ is an increasing function of } \xi_0.
\end{equation}
The quasi-periodic solutions $u_{\e,k }$ are analytic reducible KAM tori.
\end{theorem}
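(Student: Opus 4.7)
The plan I would follow combines a Birkhoff normal form (BNF) reduction near the origin, the finite-dimensional analysis of the resonant model in \cite{GT}, and an abstract KAM theorem for PDEs in the spirit of \cite{LY,GY,PP,PP16}.

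First I would rescale $u\mapsto \e^{1/4} u$, so that the nonlinearity becomes $\e$-small, and perform one step of Birkhoff normal form to eliminate all non-resonant sextic monomials, keeping only the resonant ones $u_{n_1}u_{n_2}u_{n_3}\bar u_{m_1}\bar u_{m_2}\bar u_{m_3}$ with $n_1+n_2+n_3=m_1+m_2+m_3$ and $n_1^2+n_2^2+n_3^2=m_1^2+m_2^2+m_3^2$ (momentum and linear-energy conservation). By Galilean invariance I may assume $k=0$; the four-mode set $\cS=\{-2,-1,1,2\}$ is then an invariant manifold for the resonant Hamiltonian $\cZ_6$, because no such resonance with three creations and three annihilations in $\cS$ exits $\cS$. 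Restricting to the invariant subspace spanned by the modes in $\cS$ produces the finite-dimensional integrable system studied in \cite{GT}: it admits three independent constants of motion which, up to a linear change of variables, are the quantities $\xi_1,\xi_2,\xi_3$ of \eqref{xi123}.

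Next I would symplectically reduce by $\xi_1,\xi_2,\xi_3$ to obtain a $2$-dimensional reduced system, on which \cite{GT} provides a family of periodic orbits where the Fourier actions oscillate exactly as in \eqref{battimenti davvero}. Parametrizing the family by $\xi_0$, and introducing local action-angle coordinates $(\varphi_0,I_0)$ around each such periodic orbit together with the pairs $(\varphi_i,\xi_i)$ for $i=1,2,3$, gives symplectic coordinates in which the truncated Hamiltonian is integrable with frequency $\om^\infty(\xi)=(0,1,1,4)+O(\e)$: the $0$ is the slow beating frequency and the remaining entries are the linear frequencies of the modes in $\cS$ after quotienting by the symmetries. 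The monotonicity \eqref{xi0} encodes the twist $\partial_{\xi_0}\om^\infty_0\neq 0$ along the slow direction, which I would establish by explicit computation on the $2$-dimensional reduced model.

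Finally, reinstating the normal variables $z=\{u_n\}_{n\notin\cS}$, I would prove persistence of these $4$-tori by a KAM iteration. The normal frequencies are $\Omega_n(\xi)=n^2+O(\e)$, and the small divisors to be excised are the first and second Melnikov conditions $\om^\infty(\xi)\cdot k+\Omega_n(\xi)\neq 0$ and $\om^\infty(\xi)\cdot k+\Omega_n(\xi)\pm\Omega_m(\xi)\neq 0$; these can be imposed on a Cantor set $\OO_\e\subset\OO_0$ of asymptotically full measure using the twist in all four directions. The reducibility property of Definition \ref{reducible} then follows from momentum conservation of \eqref{nls}, which makes the quadratic-in-$z$ part of the Hamiltonian block-diagonal with uniformly bounded momentum blocks. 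The main obstacle I anticipate is not the KAM scheme in itself but rather the verification of the twist \eqref{xi0}: unlike primary tori, whose twist is visible already in the quadratic BNF, here the twist in the beating direction $\xi_0$ lives at the next order and genuinely uses the nonlinearity of the GT orbit. In addition, since the torus is secondary, the coordinates $(\varphi_0,I_0)$ are only locally defined along the beating periodic orbit, so the KAM scheme must be set up in such a local patch and the small-divisor and measure estimates have to be transported through the homotopically non-trivial cycle $\varphi_0$, a bookkeeping issue that requires some care.
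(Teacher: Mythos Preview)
Your overall strategy coincides with the paper's: rescale, one step of Birkhoff normal form, restrict to the invariant set $\cS$, pass to action--angle variables around the Gr\'ebert--Thomann periodic orbits, verify twist, verify Melnikov conditions, run a KAM iteration. The translation trick for general $k$ and the identification of $\xi_1,\xi_2,\xi_3$ as the three commuting integrals are exactly as in the paper.

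There is however one genuine gap. You assert that ``the reducibility property\ldots follows from momentum conservation of \eqref{nls}, which makes the quadratic-in-$z$ part of the Hamiltonian block-diagonal with uniformly bounded momentum blocks.'' Momentum conservation does produce a block structure, but it does \emph{not} give constant coefficients: after introducing the slow angle $\varphi_0$ around the beating orbit, the quadratic normal form still depends on $\varphi_0$ through the functions $f(\xi,\varphi_0)$, $\mathcal U_j(\xi,\varphi_0)$, $\mathcal V_j(\xi,\varphi_0)$. The paper stresses that this reduction ``is not trivial and not of a perturbative nature but must be constructed essentially by hand,'' and carries it out in three steps: a first phase shift removing the dependence on $\vt_1,\vt_2,\vt_3$; a second phase shift removing the $\varphi_0$-dependence from all modes with $|j|\ge 5$; and then an application of Floquet's theorem to the remaining $2\times 2$ blocks at $j=\pm 3,\pm 4$ (using that the matrices lie in $\mathfrak{su}(2)$, so the Floquet exponent can be chosen skew-Hermitian). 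Without this preliminary reduction to constant coefficients you cannot enter a standard KAM scheme such as \cite{PP16}, and momentum conservation alone will not supply it.

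Two smaller points. First, the twist you need is not just $\partial_{\xi_0}\muu_0\neq 0$ but the nondegeneracy of the full $4\times 4$ Hessian $\partial^2_{E,K}H$; the paper obtains this via Vey's theorem (analytic extension of the action to the stable fixed point) and an explicit Birkhoff normal form computation at $(E,K)=(0,4,0,2)$. Second, your worry about ``transporting the small-divisor and measure estimates through the homotopically non-trivial cycle $\varphi_0$'' is unfounded: once you work in the local action--angle chart $(E,K;\varphi,\psi)$ of \eqref{integro}, the KAM iteration is entirely standard and the secondary nature of the torus plays no further role.
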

The proof of this Theorem is performed essentially in two steps, formally described in Propositions \ref{formanorm} and \ref{KAM}. We now give an informal presentation of the strategy. From now on, we shall systematically work on the Hamiltonian \eqref{nn}, moreover since the sign $\s$ will play no role in our reasoning, we shall for simplicity set it to one in the following.

\smallskip

It is convenient to study the Hamiltonian \eqref{nn} in the Fourier coefficients
$
u(t,x)=\sum_{j\in\Z} u_j(t) e^{ijx}
$ where the sequence $\{u_j\}_{j\in \Z}\in \ell^{a,p}$. 
We rescale $u \rightsquigarrow \varepsilon^{1/4} u$, so that the Hamiltonian $\HH$ assumes the form
\begin{equation}\label{NLS1}
\HH=\sum_{j\in\Z} j^2|u_j|^2+\varepsilon\sum_{j_1+j_2+j_3=j_4+j_5+j_6} u_{j_1}u_{j_2}u_{j_3}\bar u_{j_4}\bar u_{j_5}\bar u_{j_6} 
\end{equation}
with the constants of motion
\begin{equation}\label{costanti0}
\mathbb L=\sum_{j\in\Z} |u_j|^2\,,\qquad \mathbb M= \sum_{j\in \Z} j |u_j|^2.
\end{equation}

\begin{remark}\label{wmbw}
	For all $k\in\Z$, equation \eqref{nls} is invariant with respect to the transformation
	\begin{equation}\label{traslo}
	v_j(t) := u_{j+k} (t) e^{-it(k^2+2jk)}.
	\end{equation}
	Since the actions satisfy $|v_j|^2=|u_{j+k}|^2$, this means that for any given solution $u(t,x)$ of \eqref{nls}, there exists another solution $v(t,x)$ whose actions have the same dynamics, up to a translation by $k$ of the Fourier support.
\end{remark}
It is well known that the dynamics of \eqref{NLS1} is controlled for finite but long times by the corresponding Birkhoff Hamiltonian, containing only the terms in \eqref{NLS1} which Poisson commute with the quadratic part $H^{(2)}:= \sum_{j\in \Z} j^2 |u_j|^2$, see Subsection \ref{birkhof}. In the case of Hamiltonian \ref{NLS1} we get 
$$
H_{\rm Birk}= \sum_{j\in\Z} j^2|u_j|^2+ \varepsilon\!\!\!\!\!\!\sum_{\substack{ j_1+j_2+j_3=j_4+j_5+j_6 \\  j_1^2+j_2^2+j_3^2=j_4^2+j_5^2+j_6^2}} \!\!\!\!\!\! u_{j_1}u_{j_2}u_{j_3}\bar u_{j_4}\bar u_{j_5}\bar u_{j_6}
$$
which was studied in detail in \cite{GT} and \cite{HT}. A simple remark, proved for instance in \cite{PP} Propositions 1 and 2, is that while the dynamics of $H_{\rm Birk}$ may be very complicated still one may produce many subspaces 
 which are invariant for the  dynamics of $H_{\rm Birk}$ and such that the restricted dynamics  is integrable. 
 
  More precisely given a
  set $\mathcal S\subset \Z$ we define the subspace
  $$
  U_\mathcal S:= \{u\in L^2(\T):  \quad u(x)= \sum_{j\in \mathcal S} u_j
  e^{\ii j x}\}\,,
  $$
  and consider the following definitions.
  \begin{definition}[Complete]\label{completeness}
  	We say that a set $\SS\subset\Z$ is {\em complete} if $U_\SS$ is invariant for
  	the dynamics of $H_{\rm Res}$.
  \end{definition}
  \begin{definition}[Action preserving]\label{integrable}
  	A complete set $\SS\subset\Z$ is said to be {\em action preserving} if all the
  	actions $|u_j|^2$ with $j\in \SS$ are constants of motion for the dynamics of
  	$H_{\rm Res}$ restricted to $U_\SS$.
  \end{definition}
  The conditions under which a given set $\SS$ is complete or action preserving
  can be rephrased more explicitly by using the structure of $H_{\rm Birk}$.
  \begin{definition}[Resonance]\label{risuona} Given a sextuple
  	$(j_1,\dots,j_{6})\in (\Z)^{6}$ we say that it is a resonance 
  	if
  	\begin{equation}\label{resonance}
  	\begin{cases}
  	j_1+j_2+j_3=j_4+j_5+j_6 \\  j_1^2+j_2^2+j_3^2=j_4^2+j_5^2+j_6^2
  	\end{cases}
  	\end{equation}
  \end{definition}
   With this Definitions  one easily sees that generic choices of $\SS$  lead to complete and action preserving subspaces. Indeed $\SS$ is complete if and only if for any quintuple
   $(j_1,\dots,j_{5})\in \SS^{5}$ there does not  exist any  $k\in
   \Z\setminus\SS$ such that $(j_1,\dots,j_{5},k)$ is a resonance.
   Similarly $\SS$ is action preserving if all resonances $(j_1,\dots,j_{6})\in
   \SS^{6}$ are {\em trivial}, namely there exists a permutation such that
   $(j_1,j_2,j_3)=(j_{4},j_5,j_{6})$.  
   
   Note that the tori constructed in \cite{GY}, \cite{LY}  and \cite{PP16} are  all essentially supported on such generic sets $\SS$. In this paper we study instead a simple choice of set $\SS$ which is {\em complete but not action preserving}. Following \cite{GT}, we choose  $\mathcal S:=\{-2,-1,1,2\}$ and its translations, see Remark \ref{traslo}. 
   As shown in \cite{GT}, $H_{\rm Birk}$ restricted to $\SS$ is in fact still integrable (it has four degrees of freedom and  three constants of motion, see Formulas \ref{ristretto} and \ref{aggiungi}) however it has a non-trivial structure. To study this system we restrict to a domain which is the product of a 4-cube and a 4-torus, such that all the variables in the 4-cube are constants of motion for the dynamics of $H^{(2)}$.  If we take into account also the resonant terms of degree $6$ we get a different qualitative behavior.
    Indeed after symmetry reduction  it is a Hamiltonian system on a cylinder  with one stable  and {one} unstable fixed point with its separatrix, see Figure \ref{vaffa2}.
Namely there is one   stable   and  one unstable 3-torus coming from the resonant structure.  We concentrate our attention on the new set of quasi periodic solutions, which are families of stable 4-tori around the new stable 3-torus. The structure of these solutions as well as their action angle variables is studied in section \ref{quattro}.
A key point is that these tori satisfy the so-called {\em twist condition}, i.e. the determinant of the Hessian of $H_{\rm Birk}$ restricted to $\SS$ is not identically zero, see Lemma \ref{twistlemma}.
In the paper \cite{GT}, these tori were  used to prove the existence of {\em beating} solutions of the \eqref{nls} for long but finite times. This follows from the fact that the dynamics of $H_{\rm Birk}$ approximates the true dynamics of \eqref{NLS1} for long times. 

We wish to prove the persistence of {\em Cantor families} of such quasi periodic solutions for infinite time, as well as their linear stability by applying a KAM scheme. A main point naturally is to control the behavior of the infinitely many {\em normal directions}, i.e. the Fourier coefficients $u_j$  with $j\neq \pm 1,\pm 2$. As is well known, a crucial preliminary point is to study the reducibility of the Birkhoff system linearized  at the quasi-periodic solutions. This amounts to removing the angle dependence and diagonalizing an infinite dimensional quadratic Hamitonian. 

As happens already in the case of solutions bifurcating from linear tori (see \cite{PP}), this reduction is not trivial and not of a perturbative nature but must be constructed essentially by hand. In our case we proceed as follows:
 
\begin{enumerate}
 \item We perform a phase shift \eqref{phase1} which removes the dependence on all the angles except one. We are left with a quadratic Hamiltonian which depends on one angle and  is diagonal except for a $4\times4$ block.
 \item We  apply a phase shift \eqref{phase2} which removes the dependence on the last angle from all the terms of the Hamiltonian except for the non-diagonal  $4\times4$ block.   
 \item To this last {\em finite dimensional} block we apply {\em Floquet Theorem} \ref{Floquet.complex},  in order to remove the dependence on the angle. Finally we diagonalize it by using the  standard theory of quadratic Hamiltonians.
\end{enumerate}
This proves the first item in Proposition \ref{formanorm}.  In order to prove the second item we must have some control of the eigenvalues of our quadratic Hamiltonian in the normal directions, we discuss this in subsection \ref{melni}. The third item follows by a careful study of our changes of variables.
Once we have proved Proposition \ref{formanorm}, the NLS Hamiltonian essentially fits all the hypotheses of a KAM scheme for a PDE on the circle. In Proposition \ref{KAM} we give a brief description of such algorithms, in order to make the paper self contained. 

\section{Functional setting and main Propositions}\label{funset}
\subsection{Functional setting}
Given a possibly empty set $\mathcal S\subset\Z$ and setting $n= | \mathcal S|$ we consider the scale of complex Hilbert spaces:
\begin{equation}\label{scale}
{\bf{\ell}}^{a,p}_\mathcal S:=\{ \{u_k \}_{k\in \Z\setminus\SS}\;\big\vert\;|u_0|^2+\sum_{k\in \Z\setminus\mathcal S} |u_k|^2e^{2 a |k|} |k|^{2p}:=||u||_{a,p}^2 \le \infty \},\end{equation}
where $a>0,\ p>1/2$ are fixed once and for all. When $ \mathcal S = \emptyset $ we denote $ \ell^{a,p} := \ell^{a,p}_{ \mathcal S}  $. We consider the direct product 
\begin{equation}\label{E}
E:= \C^n \times \C^n \times \ell^{a, p}_{ \mathcal S} \times \ell^{a, p}_{ \mathcal S} 
\end{equation}
We endow  the space $ E $  with the $(s,r)$-weighted norm 
\begin{equation}\label{normaEsr}
\mathtt v =  (\varphi,y,z,\bar z) \in E \, , \quad
\|\mathtt v\|_E:=\|\mathtt v\|_{E,s,r}= \frac{|\varphi|_\infty}{s} + \frac{|y|_1}{r^2}
 +\frac{\|z\|_{a,p}}{r}+\frac{\|\bar z\|_{a,p}}{r}
\end{equation}
where, $ 0 < s, r < 1 $, and 
$ |\varphi|_\infty := \max_{h =1, \ldots, n} |\varphi_h| $,
$ |y|_1 := \sum_{h=1}^n |y_h| $.
We shall also use the notations
$$
z_j^+ = z_j \, , \quad z_j^- = \bar z_j \, .
$$
We remark that $E$ is a symplectic space w.r.t. the form $dy \wedge d\varphi + \ii \sum_{k}d u_k\wedge d \bar u_k$.
We also define the toroidal domain
\be\label{Dsr}
D (s,r) := \T^n_s \times D(r) := \T^n_s \times B_{r^2} \times B_r
\times  B_r \subset E
\ee
where $ D(r):= B_{r^2} \times B_r \times B_r $,
\be\label{seconda}
 \T^n_s := \Big\{ \varphi \in \C^n \, : \, \max_{h=1, \ldots, n} |{\rm Im} \, \varphi_h | < s  
 \Big\} \, , \ \
 B_{r^2}  := \Big\{ y \in \C^n \, : \, |y |_1 < r^2 \Big\}
\ee
and $ B_{r} \subset \ell^{a,p}_{\mathcal S} $ is the open ball of radius $ r $
centered at zero. We think of $\T^n$ as the $ n $-dimensional torus
$ \T^n := 2\pi \R^n/\Z^n $.
\begin{remark}
If $ n = 0 $  then $ D(s,r) 
 \equiv B_r\times B_r \subset
\ell^{a,p}\times \ell^{a,p}$.
\end{remark}
For a vector field  $X:  D(s,r)\to E$, described by the formal Taylor expansion:
$$
 X = \sum_{\nu,i,\a,\b} X_{\nu,i,\a,\b}^{(\mathtt v)} e^{\ii \nu \cdot \varphi} y^i z^\a \bar z^\b \partial_{\mathtt v}\,, \quad \mathtt v= \varphi,y,z,\bar z$$   
 
 we define the {\em majorant} and its {\em norm}\footnote{The different weights in formula \eqref{normaEsr} ensure that, if $| X|_{s,r}$ is sufficiently small,  then $X$ generates a close--to--identity  change of variables from $D(s/2,r/2)\to D(s,r)$.}:
\begin{eqnarray}\label{normadueA}
MX & :=& \sum_{\nu,i,\a,\b} |X_{\nu,i,\a,\b}^{(\mathtt v)}| e^{s|\nu|} y^i z^\a \bar z^\b \partial_{\mathtt v}\,, \quad \mathtt v= \varphi,y,z,\bar z\nonumber\\
| X |_{s,r} & := &
\sup_{(y,z, \bar z) \in D(s,r)} \| M X \|_{E,s,r} \,.  \end{eqnarray}

 In our algorithm we deal with Hamiltonian vector fields  which depend in a Lipschitz  way on some parameters $\xi\in \R^n$ in a compact set $ \mathcal O \subset \R^n$. To handle this dependence one introduces weighted  norms.   Given  $X:  \OO \times D(s,r)\to E$ we set
 \begin{equation}
 \label{weno}   | X(\xi)|^\gamma_{s,r,\mathcal O}:=  \sup_{\xi\in \mathcal O}| X(\xi)|_{s,r} +\gamma \sup_{\eta\neq \xi\in \mathcal O}\frac{| X(\xi)-X(\eta)|_{s,r}}{|\xi-\eta|}
 \end{equation}  where $\gamma $ is a  parameter.  Sometimes when $\mathcal O$ is understood we just  write $ | X|^\gamma_{s,r,\mathcal O}= | X|^\gamma_{s,r}$.
 \begin{definition}
  We say that a Hamiltonian function is {\em $M$-regular} if its associated vector field is $M$-analytic i.e. has finite \eqref{normadueA} norm. 
  We define the norm of such $H$ as
  \begin{equation}\label{hasidin}
  |H|^\gamma_{s,r}:= |X_H|^\gamma_{s,r}\,.
  \end{equation}
 We denote the space of functions with finite $|\cdot |^\gamma_{s,r}$ norm as $\cA_{s,r,\cO}\equiv \cA_{s,r}$.
 \end{definition}
\subsection{Main Propositions}
We fix once and for all\footnote{by Remark \ref{wmbw} we could trivially shift for any $k$, to $\cS=\{k-2,k-1,k+1,k+2\}$. }
$
\cS:=\{-2,-1,1,2\}
$ and $n=4$.  As explained in the introduction we first obtain a {\em normal form} result for our Hamiltonian \eqref{NLS1}.
\begin{proposition}\label{formanorm}
 There exist  $\e_0,s_0,r_0>0$  and a compact domain $\mathcal O_0\subset \mathbb R^4 $ of  positive measure such that the following holds:
 
 for all $\e<\e_0,s<s_0,r<r_0 $ there exists an analytic function
 $$
\Psi:\, \OO_0\times D(s,r) \to \ell^{a,p} \,,
 $$
 such that  for all  $\xi\in \OO_0$ the map $ \Psi(\xi; \cdot): D(s,r) \to \ell^{a,p}$  is symplectic and satisfies the following properties:

 \begin{itemize}
\item[$(i)$] Define  $v(\xi;\varphi) = \{ v_j (\xi;\varphi) \}_{j\in\Z} := \Psi (\xi; \varphi, y = 0, w = 0)$. Then, for all $j\in\SS$, equations \eqref{battimenti davvero}, \eqref{xi123}, \eqref{xi0} hold.

\item[$(ii)$] The NLS Hamiltonian $\mathcal H$ defined in \eqref{NLS1} in the new variables becomes:
\begin{equation}
\mathcal H\circ \Psi:=\HH_{\mathtt{NLS}}= \muu(\xi)\cdot y +  \sum_{j\neq\pm1,\pm2,} \Omega_j |w_j|^2 + \mathcal R
 \,,\quad \Omega_j= (j^2 + \varepsilon \Theta_j(\xi)) \,,
 \end{equation}
 where 
 $$
 \muu(\xi)= (0,1,1,4) +\e \laa(\xi)
 $$
 with $\xi\to \laa(\xi)$ an invertible analytic map  and $\Theta_j$ is chosen in a finite list (in fact with 5 elements)
 of distinct analytic functions.
 
\item[$(iii)$]  For all $\xi\in \OO_0$, we have the  bounds:
 $$
 |\laa|+ |\partial_\xi \laa|<M_0\,,\quad |\partial_\laa \xi|<L_0\,, \quad|\Theta_j|+ |\partial_\xi \Theta_j|<M_0 \,, \forall j\neq \pm 1,\pm 2
 $$
 $$
 |\laa\cdot \ell + \Theta_i \pm \Theta_j|>\alpha_0, \quad \forall (\ell,i,j)\neq (0,i,i): |\ell|< 4 M_0L_0
 $$
 $$
 |\mathcal R|_{s,r,\OO_0}^{1} \le  {\rm C} (\e^2r^{-2} +\e r )\,,
 $$
 
 \item[$(iv)$]  $\HH_{\mathtt{ NLS}}$ has the constants of motion:
  $$
  \mathbb L= y_1+y_2 + y_3 + \sum_{j\neq \pm 1,\pm2} |w_j|^2\,,$$
  $$
 \mathbb  M =y_1-y_2-2 y_3 +\sum_{j=3,4} j (|w_j|^2+ |w_{-j}|^2)+  \sum_{j\neq \pm 1,\pm2,\pm 3,\pm 4} j |w_j|^2
  $$
  \end{itemize}
\end{proposition}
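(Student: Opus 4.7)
The plan is to build $\Psi$ as a composition of four symplectic maps: a Birkhoff normal form step, the introduction of action-angle coordinates for the beating tori, two phase shifts, and a Floquet-type reduction of a remaining finite-dimensional block. First I would perform one step of Birkhoff normal form on \eqref{NLS1}: this gives an analytic symplectic map defined on a ball of size $r$ conjugating $\mathcal H$ to $H_{\rm Birk}+\e^2 R$, where $H_{\rm Birk}$ contains only resonant sextuples in the sense of Definition \ref{risuona} and $R$ is bounded in $\ell^{a,p}$. Since $\cS=\{-2,-1,1,2\}$ is complete, $U_\cS$ is invariant under $H_{\rm Birk}$ and the restricted system is integrable with three extra integrals beyond the energy; after symmetry reduction by $\mathbb L$ and $\mathbb M$, the phase portrait (Figure \ref{vaffa2}) reveals a stable equilibrium surrounded by a family of periodic orbits, which lifts to a family of 4-tori parametrized by $(\xi_1,\xi_2,\xi_3)$ via \eqref{xi123} together with a fourth action $\xi_0$ singled out by \eqref{xi0}.

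Next I would construct explicit action-angle coordinates $(\varphi_0,\ldots,\varphi_3,y_0,\ldots,y_3)$ adapted to these tori, with $\varphi_0$ tracking the beating phase. In these variables $H_{\rm Birk}\vert_{U_\cS}$ depends only on $y$, and its linearization at $y=0$ yields the frequency vector $\muu(\xi)=(0,1,1,4)+\e \laa(\xi)$; the invertibility of $\xi\mapsto \laa(\xi)$ follows from the twist condition of Lemma \ref{twistlemma}. Evaluating $\Psi$ on the zero section $y=w=0$ produces the embedded tori $v_j(\xi;\varphi)$, whose moduli $\mathtt f_j$ can be read off directly from the phase portrait, establishing (i). Away from the zero section, $\Psi$ is extended by coupling with normal coordinates $w=\{w_j\}_{j\notin\cS}$, taken as suitably shifted Fourier modes.

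The quadratic-in-$w$ part of $\mathcal H\circ\Psi$ is then $\sum_{j\notin\cS} j^2 |w_j|^2+\e Q(\varphi;w,\bar w)$, where $Q$ is a Hermitian $\varphi$-dependent form generated by the sextic resonant interactions between $\cS$-modes and normal modes. Mass and momentum conservation force only finitely many modes to couple to each $w_j$, and a direct inspection shows that $Q$ is diagonal in most directions except on a finite set of indices where a nontrivial $4\times 4$ block appears. To reduce $Q$ to constant diagonal form I would follow the three-step scheme announced in the introduction: the phase shift \eqref{phase1}, built from linear combinations of the $\mathbb L$- and $\mathbb M$-translations, removes the dependence on $\varphi_1,\varphi_2,\varphi_3$; the shift \eqref{phase2} absorbs the residual $\varphi_0$-dependence of the diagonal part into the coefficients $\Theta_j$; and the remaining $\varphi_0$-periodic $4\times 4$ block is normalized by Theorem \ref{Floquet.complex} and then diagonalized by standard theory of Hermitian quadratic Hamiltonians. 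This yields $\Omega_j=j^2+\e \Theta_j(\xi)$ with only five distinct $\Theta_j$, establishing (ii).

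The main obstacle is item (iii), specifically the Melnikov conditions $|\laa\cdot\ell+\Theta_i\pm\Theta_j|>\alpha_0$ for all $|\ell|<4M_0L_0$ and $(\ell,i,j)\neq(0,i,i)$: because only five distinct $\Theta_j$ arise, these reduce to finitely many inequalities in $\xi$, but one must first trace through the explicit $4\times 4$ Floquet block to verify that each relevant combination is a nontrivial analytic function of $\xi$, and then excise a subset of $\mathcal O_0$ of controlled (positive) measure using analyticity of $\laa$ and $\Theta_j$ together with the twist. Once this Melnikov step is secured, the estimate $|\mathcal R|^1_{s,r,\OO_0}\le C(\e^2 r^{-2}+\e r)$ is routine from the $M$-norm bookkeeping in \eqref{normadueA}--\eqref{hasidin}: the $\e r$ term encodes the cubic and higher interactions between $y$ and $w$ that survive the reduction, while the $\e^2 r^{-2}$ term encodes the Birkhoff tail $R$ pulled back to $D(s,r)$. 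Finally, (iv) is immediate because each change of variables (Birkhoff step, action-angle, phase shifts, Floquet, diagonalization) either is generated by a combination of $\mathbb L$ and $\mathbb M$ or commutes with them by symmetry, so rewriting the two constants of motion in the new coordinates gives the displayed expressions directly.
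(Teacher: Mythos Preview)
Your strategy is essentially the paper's---Birkhoff step, action--angle on the tangential block, two phase shifts and a Floquet reduction on the normal quadratic form, then the Melnikov verification of Proposition~\ref{melnikov} and Lemma~\ref{melquant}---but you reverse the order of the first phase shift and the action--angle integration. In the paper the shift \eqref{phase1} is carried out in the intermediate $(J,\vartheta)$ coordinates of \eqref{act2}, \emph{before} integrating the one-degree-of-freedom system \eqref{integro}; because \eqref{phase1} is written in the linear angles $\vt_0,\vt_1,\vt_2$, the normal quadratic form in \eqref{camel} then depends only on $q=\vt_0$, and the action--angle map \eqref{integro} turns this into dependence on the single beating angle $\varphi$, with no $\psi_1,\psi_2,\psi_3$ dependence left to remove. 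Doing action--angle first, as you propose, forces the subsequent shift to absorb the nonlinear corrections $\mathcal F(E,K,\varphi)$ coming from \eqref{integro}; this is workable but no longer a clean linear phase shift, and in particular \eqref{phase1} is \emph{not} built from $\mathbb L$- or $\mathbb M$-flows (those rotate all modes uniformly) but is an ad hoc shift acting only on $z_{-3},z_{-4}$ with phases $n_j\vt_0+\ell_j(\vt_2-\vt_1)$, completed symplectically by the displayed corrections to $J_0,K_1,K_2$. One further point on item~(iii): the paper does not excise a positive-measure subset; rather $\OO_0$ is chosen as an $\e$-independent compact domain lying entirely in the complement of the finite union of proper algebraic hypersurfaces produced by Lemma~\ref{twistlemma} and Proposition~\ref{melnikov} (see Lemma~\ref{melquant}), which is what delivers a uniform $\alpha_0>0$ rather than a Cantor-type remainder.
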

\begin{proof} The proof of this Proposition is deferred  to subsection \ref{prova1}, since it requires all the structure developed in Sections \ref{birkhof}, \ref{quattro}. 
\end{proof}
  \begin{proposition}\label{KAM}
Consider the  Hamiltonian $\cH_{\tt NLS}$ in Proposition  \ref{formanorm} and fix  $r=\e^{1/3}$. There exist  $\e_\star,s_\star, \gamma_\star, {\mathtt C}_\star >0$  and,  for all $ \e \in(0,\e_\star)$ and  $\gamma\in ({\mathtt C}_\star \e^{1/3},\gamma_\star) $,
a compact domain $\mathcal O_\infty= \OO_\infty(\gamma,\e ) \subseteq \OO_0 $ of  positive measure with $|\OO_0\setminus\OO_\infty|\sim \gamma$, such that for all $s<s_0$ there exists a Lipschitz family of analytic affine symplectic maps 
  $$ \OO_\infty \ni \xi \mapsto \Psii_\infty(\xi; \cdot): \;D\left(\frac s 4,\frac r 4\right) \to D(s,r)$$ with the following properties.
  \begin{itemize}
  \item[$(i)$] $ \Psii_\infty$ is $O(\e^{1/3}\gamma^{-1})$ close to the identity, i.e.
  $$
  \| \Psii_\infty(\xi; \mathtt v) - \mathtt v\|_{E,s/4,r/4} \le  \e^{1/3}\gamma^{-1} \qquad \forall \mathtt v \in D\left( \frac s 4, \frac r 4 \right)
  $$
\item[$(ii)$] One has $$
  \cH_{\tt NLS} \circ \Psii_{\infty}:= \muu^{\infty}\cdot y +  \sum_{j\neq\pm1,\pm2,} \Omega^\infty_j |w_j|^2 + \mathcal R^\infty
  $$
  where
  $$
  X_{\cR^\infty}\vert_{y=0 \atop w=0}=0\,,\quad d_{y,w} X_{\cR^\infty}\vert_{y=0 \atop w=0}=0 \,, 
  $$
and 
 $$
 \muu^\infty(\xi)= (0,1,1,4) +\e \laa^\infty(\xi) \,,\quad \Omega_j^{\infty}(\xi)= j^2+ \e \Theta^{\infty}_j
 $$
 with $\xi\to \laa^\infty(\xi)$ an invertible Lipschitz  map  $|\laa^{\infty}-\laa|\sim \e^{1/3}\gamma^{-1}$, $|\Theta_j^{\infty}-\Theta_j|\sim \e^{1/3}\gamma^{-1}$.
  \end{itemize}
  
  \end{proposition}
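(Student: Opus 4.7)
The strategy is a standard quadratic KAM iteration with reducibility of the normal linearization, adapted to the normal form provided by Proposition \ref{formanorm}. With $r = \e^{1/3}$, item (iii) gives $|\cR|^1_{s,r,\cO_0}\le C\e^{4/3}$, which we take as the initial smallness $\e_0$. The iteration produces sequences $s_\nu \searrow s/4$, $r_\nu \sim r/4$, shrinking parameter sets $\cO_\nu \supseteq \cO_{\nu+1}$ with $\cO_\infty:=\bigcap_\nu \cO_\nu$, near-identity symplectic maps $\Phi_\nu = \exp(X_{F_\nu})$, and remainders with $|\cR_\nu|^\gamma_{s_\nu,r_\nu,\cO_\nu}\le\e_\nu$ satisfying the quadratic bound $\e_{\nu+1}\lesssim \gamma^{-2}K_\nu^C\e_\nu^2$ for a truncation parameter $K_\nu\nearrow\infty$.

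At step $\nu$ we only need to eliminate the obstructions to invariance of $\{y=0,\,w=0\}$ and to reducibility of the linearization along it, namely the $\varphi$-Fourier modes $|\ell|\le K_\nu$ of the part of $\cR_\nu$ that is of degree $\le 1$ in $y$ and of degree $\le 2$ in $(w,\bar w)$. The homological equation for $F_\nu$ is solved Fourier-by-Fourier, with denominators
\begin{equation*}
\muu_\nu\cdot\ell,\qquad \muu_\nu\cdot\ell + \sigma\Omega^\nu_j,\qquad \muu_\nu\cdot\ell + \sigma\Omega^\nu_i + \sigma'\Omega^\nu_j,\quad \sigma,\sigma'\in\{\pm\},
\end{equation*}
excluding only the truly resonant combinations $\ell=0$ with $(\sigma,\sigma')=(+,-)$, $i=j$, which we absorb as corrections $\laa^\nu\to\laa^{\nu+1}$, $\Theta^\nu_j\to\Theta^{\nu+1}_j$. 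We impose Diophantine lower bounds $\ge\gamma\langle\ell\rangle^{-\tau}$ on these denominators and let $\cO_{\nu+1}\subseteq \cO_\nu$ be the resulting Cantor-like set.

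Quadratic convergence then follows by the standard iterative lemma, and the telescoping sum yields $\|\Psii_\infty(\xi;\mathtt v)-\mathtt v\|_{E,s/4,r/4}\lesssim \sum_\nu |F_\nu|^\gamma_{s_\nu,r_\nu}\lesssim \e^{1/3}\gamma^{-1}$, since each homological equation costs a factor $\gamma^{-1}$ and $\e_0\sim\e^{4/3}$. Passing to the limit gives the claimed normal form with $|\laa^\infty-\laa|,\,|\Theta^\infty_j-\Theta_j|\lesssim \e^{1/3}\gamma^{-1}$; the vanishing of $X_{\cR^\infty}$ and of $d_{y,w}X_{\cR^\infty}$ at $y=w=0$ is automatic since the degree-$\le 2$ obstructing terms have been pushed to infinite order. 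The measure estimate $|\cO_0\setminus\cO_\infty|\lesssim \gamma$ follows by summing resonant strips: the Lipschitz bounds from (iii) give, for each $(\ell,i,j,\sigma,\sigma')$, an excluded strip of measure $\lesssim \gamma\langle\ell\rangle^{-\tau}$, summable for $\tau$ large.

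The main difficulty is the second Melnikov condition $|\muu_\nu\cdot\ell + \Omega^\nu_i - \Omega^\nu_j|\ge \gamma\langle\ell\rangle^{-\tau}$ for unbounded $i,j$. Writing $\Omega^\nu_j = j^2 + \e\Theta^\nu_j$ with $\Theta^\nu_j$ drawn from a finite list of analytic functions up to $O(\e^{1/3}\gamma^{-1})$ corrections, the regime $|i^2-j^2|\gg|\ell|$ is automatic, while the delicate regime $|i^2-j^2|\sim|\ell|$ reduces, thanks to the finiteness of the list of $\Theta_j$'s guaranteed by item (ii) of Proposition \ref{formanorm}, to finitely many Lipschitz-nondegenerate conditions on $\xi$ handled by (iii).
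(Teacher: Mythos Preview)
Your overall scheme is the right one and matches the paper's, but there are two genuine gaps in the argument as written.

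\medskip

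\textbf{1. Wrong scale of the small divisors.} You impose $|\muu_\nu\cdot\ell+\sigma\Omega^\nu_i+\sigma'\Omega^\nu_j|\ge\gamma\langle\ell\rangle^{-\tau}$ and say each homological equation costs a factor $\gamma^{-1}$. But $\muu=(0,1,1,4)+\e\laa$ has a \emph{resonant integer} leading part: already $\ell=(1,0,0,0)$ gives $\muu\cdot\ell=\e\laa_0$, and more generally whenever $\ell_1+\ell_2+4\ell_3+\sigma i^2+\sigma' j^2=0$ the whole expression is $\e(\laa\cdot\ell+\sigma\Theta_i+\sigma'\Theta_j)$, of size $O(\e)$. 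Since the statement allows $\gamma$ up to a fixed $\gamma_\star$ independent of $\e$, your condition would make $\cO_1$ empty. The correct bound (and the one the paper uses) is $\ge\e\gamma K_m^{-\tau}$; the homological equation then costs $(\e\gamma)^{-1}$, and $\e^{4/3}\cdot(\e\gamma)^{-1}=\e^{1/3}\gamma^{-1}$, which is how one actually arrives at the size you quote. Your derivation ``$\e_0\sim\e^{4/3}$, cost $\gamma^{-1}$, hence $\e^{1/3}\gamma^{-1}$'' is arithmetically inconsistent.

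\medskip

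\textbf{2. The second Melnikov measure estimate needs momentum conservation, not just the finite $\Theta$-list.} You argue that in the regime $|i^2-j^2|\lesssim|\ell|$ the finiteness of the list of $\Theta_j$'s reduces everything to finitely many Lipschitz conditions. This fails precisely in the worst case $\sigma=-\sigma'$, $|i|=|j|$. For $|j|>4$ one has $\Theta_i=\Theta_j=f_0$, so the list gives you nothing: the resonance condition degenerates to $\muu\cdot\ell$ alone, and for each fixed $\ell$ there are a priori infinitely many pairs $(i,-i)$ contributing a resonant strip, which would destroy summability. The paper closes this gap by exploiting the constants of motion $\mathbb L,\mathbb M$ from item~(iv) of Proposition~\ref{formanorm}: since $\cR$ Poisson-commutes with mass and momentum, only monomials with $\eta(\ell)+\sigma+\sigma'=0$ and $\pi(\ell)+\sigma\mathtt r(i)+\sigma'\mathtt r(j)=0$ occur, and the latter forces $2|i|\le|\pi(\ell)|$ when $i=-j$, so the union over $(i,j)$ is finite for each $\ell$. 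The same mechanism (together with the explicit bound $|\laa\cdot\ell+\Theta_i\pm\Theta_j|>\alpha_0$ for $|\ell|<4M_0L_0$ from item~(iii)) is what makes the measure estimate go through. You invoke item~(ii) but never item~(iv), and without it the argument does not close.
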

  \begin{proof} The proof of this proposition is deferred to Section \ref{sec:KAM}.
  \end{proof}
  
  \subsection{Proof of Theorem \ref{thm:main}}
  We choose $\OO_0$ as in Proposition \ref{formanorm} and $\e_0 > 0$ as the smaller between $\e_0$ in Proposition \ref{formanorm} and $\e_\star$ in Proposition \ref{KAM}, same for $s_0$. We choose $\gamma = (-\log\e)^{-1}$ so that the hypotheses of Proposition \ref{KAM} are fulfilled for $\e$ small enough. Then, for all $\xi \in \OO_\infty(\gamma, \e)$ given by Proposition \ref{KAM}, the 4-dimensional torus $\Psi \circ \Phi_\infty (\xi; \varphi, y = 0, w = 0)$ is a reducible KAM torus, by Proposition \ref{KAM} $(ii)$. Equations \eqref{la soluzione}-\eqref{xi0} hold true by Proposition \ref{formanorm} $(i)$, by Proposition \ref{KAM} $(i)$, and by the definition of the norm $\| \cdot \|_{E,s,r}$ in \eqref{normaEsr}. This gives the thesis for $k=0$.
  
The fact that the thesis holds for all $k \in \Z$ is a trivial consequence of Remark \ref{wmbw}.

  \section{Birkhoff Normal form}\label{birkhof}
 We study the Hamiltonian \eqref{NLS1} which, following the definitions of Section \ref{funset}, is an analytic function $:B_R\times B_R\to \C$ with maximal degree $6$.  
We apply a step of Birkhoff normal form (cf. \cite{Bo3},\cite{Bo2},\cite{bamb}),  by which we cancel all the  terms  of degree $6$ which do not Poisson commute with   $$ H^{(2)}:= \sum_{j\in \Z} j^2 |u_j|^2.$$ 
  This is done by applying  a well known analytic change of variables, with generating function 
 \begin{equation}\label{birk1}
  F:=\e \sum_{\alpha,\beta\in (\Z)^\N: |\alpha|=|\beta|=3\atop {\sum_k (\alpha_k-\beta_k)k=0\,,\;\sum_k (\alpha_k-\beta_k)|k|^2\neq 0}} \hskip-10pt\binom{3}{\alpha}\binom{3}{\beta}\frac{u^\alpha\bar u^\beta}{\sum_k (\alpha_k-\beta_k)|k|^2}\,.
  \end{equation}
  We denote the change of variables by $\Psi_{\rm Birk}:=e^{ad(F)}$ and notice that, for $\e$ small enough, it is well defined and analytic:    $   B_{1}\times B_{1} \to B_{2} \times B_{2}$.

   By construction $\Psi_{\rm Birk}$  brings (\ref{NLS1}) to  the form
$$
\HH_{\mathtt {NLS}}=H_{\rm Birk}+ \varepsilon^2 R_{\geq10}=\sum_{j\in\Z} j^2|u_j|^2+ \varepsilon\!\!\!\!\!\!\sum_{\substack{ j_1+j_2+j_3=j_4+j_5+j_6 \\  j_1^2+j_2^2+j_3^2=j_4^2+j_5^2+j_6^2}} \!\!\!\!\!\! u_{j_1}u_{j_2}u_{j_3}\bar u_{j_4}\bar u_{j_5}\bar u_{j_6}+ \varepsilon^2 R_{\geq10}
$$
where the terms of degree $6$ (which we denote by $H_6$) are supported on  of {\em resonant sextuples}, see Definition \ref{resonance}. 
The remainder term $R_{\geq 10}$ is a M-regular analytic Hamiltonian with minimal degree $10$, defined for $u\in B_1$ and with norm
$
|R|_{1} 
$ controlled by some $\e$ independent constant.

Let us analyze in detail the term $H_6$ by studying the resonant sextuples. Among the resonant sextuples, there are the trivial ones corresponding to $\{j_1,j_2,j_3\}=\{j_4,j_5,j_6\}$. We call {\em action preserving} the trivial resonant sextuples (and the corresponding monomials in the Hamiltonian), while we call {\em effective} all the others.

We denote  $\SS=\{-2,-1,1,2\}\subset\Z$ as the set of {\em tangential sites}. The complementary set $\Z\setminus\SS$ is the set of {\em normal sites}. Whenever $j$ is a normal site, we denote $u_j=z_j$. We order the term $H_6$ according to the degree in the normal variables: we denote by $H_{6,k}$ the part of $H_6$ that is homogeneous of degree $k$ in the $z_j$'s so that
$$
H_6=\sum_{k=0}^6 H_{6,k}\ .
$$

With our choice of $\SS$, the terms $H_{6,k}$ with $k\leq2$ have a relatively simple form. In particular, the only integer solutions to \eqref{resonance} with all six elements belonging to $\SS$
\begin{equation}
\begin{cases}
(j_1,j_2,j_3)=(1,1,-2)\\
(j_4,j_5,j_6)=(-1,-1,2)
\end{cases}
\end{equation}
up to permutations of $\{j_1,j_2,j_3\}$, permutations of $\{j_4,j_5,j_6\}$ and exchange of $(j_1,j_2,j_3)$ and $(j_4,j_5,j_6)$. Moreover, there are no solutions to \eqref{resonance} with five elements in $\SS$ and one element outside $\SS$ (see Lemma 2.4 of \cite{GT}) so $\SS$ is complete. Finally, it is easy to verify that the only integer solutions to \eqref{resonance} with four elements in $\SS$ and two elements outside $\SS$ are of the form
\begin{equation}
\begin{cases}
(j_1,j_2,j_3)=(1,2,-3)\\
(j_4,j_5,j_6)=(-1,-2,3)
\end{cases}
\qquad\qquad
\begin{cases}
(j_1,j_2,j_3)=(2,2,-4)\\
(j_4,j_5,j_6)=(-2,-2,4)
\end{cases}
\end{equation}
up to permutations of $\{j_1,j_2,j_3\}$, permutations of $\{j_4,j_5,j_6\}$ and exchange of $(j_1,j_2,j_3)$ and $(j_4,j_5,j_6)$.

With the remarks above, we compute the terms $H_{6,k}$ with $k\leq2$ and obtain
$$H_{6,0}=H_{6,0}^{\rm ap}+H_{6,0}^{\rm eff}$$
$$
H_{6,0}^{\rm ap}=6\left(\sum_{j\in\SS}|u_j|^2\right)^3-9\left(\sum_{j\in\SS}|u_j|^2\right)\left(\sum_{j\in\SS}|u_j|^4\right)+4\left(\sum_{j\in\SS}|u_j|^6\right)
$$
$$
H_{6,0}^{\rm eff}=9(u_1^2 u_{-2} \bar u_{-1}^2 \bar u_2 + c.c.)
$$
$$H_{6,1}=0$$
$$H_{6,2}=H_{6,2}^{\rm ap}+H_{6,2}^{\rm eff}$$
$$
H_{6,2}^{\rm ap}=\left[18\left(\sum_{j\in\SS}|u_j|^2\right)^2-9\left(\sum_{j\in\SS}|u_j|^4\right)\right]\left(\sum_{j\notin\SS}|z_j|^2\right)$$
$$
H_{6,2}^{\rm eff}=36(u_{-1}u_{-2}\bar u_1 \bar u_2 z_3 \bar z_{-3} + c.c.) + 9 (u_{-2}^2  \bar u_2^2 z_4\bar z_{-4} + c.c.)\ .
$$
We push the terms of degree at least three in the normal sites to the remainder, namely we write
$$
\mathcal H=H_2+ \varepsilon H_{6,0}+ \varepsilon H_{6,2}+\mathcal R
$$
with
$$
\mathcal R= \varepsilon H_{6,3}+ \varepsilon H_{6,4}+ \varepsilon H_{6,5}+ \varepsilon H_{6,6}+ \varepsilon^2 R_{\geq10}\ .
$$
Note that the constants of motion remain unchanged after the Birkhoff change of variables.
\begin{remark}
Here and in the following we will denote by the same letter $\mathcal R$  ``perturbation terms'' that we are for the moment ignoring,  which we will  bound at the end of the procedure.  
\end{remark}
We have shown that the Birkhoff Hamiltonian restricted to the invariant subset $\mathcal S$ has the form:
\begin{align}\label{ristretto}
\sum_{j\in \SS} j^2 |u_j|^2+ &6\e (\sum_{j\in\SS}|u_j|^2)^3-9\e (\sum_{j\in\SS}|u_j|^2)(\sum_{j\in\SS}|u_j|^4)+4\e\sum_{j\in\SS}|u_j|^6\\
+& 9\e(u_1^2 u_{-2} \bar u_{-1}^2 \bar u_2 + \bar u_1^2 \bar u_{-2}  u_{-1}^2 u_2)\nonumber
\end{align}
with the constants of motion
\begin{equation}\label{aggiungi}
|u_1|^2+2 |u_2|^2\,,\quad |u_{-1}|^2-2 |u_2|^2\,,\quad |u_{-2}|^2+|u_2|^2
\end{equation}
For $j\in\SS$, we first pass to the {\em symplectic polar coordinates} $(\I,\phi)$ namely
\begin{equation}\label{act1}
u_j=\sqrt{\I_j}e^{i\phi_j}\qquad j\in\SS\ ,
\end{equation}
so that the symplectic form is now $d \I\wedge \phi +i d z\wedge d \bar z$. 
We work in the domain 
$$
 B_{s_0,r_0}:=
 \mathtt I_0 \times \T^4_{s_0}\times B_{r_0}\times B_{r_0}
$$
with $\mathtt I_0$ a compact domain  in $(0,\infty)^4$, so that  the change of variables $(\I,\phi, z,\bar z) \to u$ is well defined and analytic. One easily sees that, for $\e $ small enough,
 $\mathcal R$ and  $\partial_{\I_j} R$  with $j=1,\dots,4$, are  M-regular Hamiltonians and
 $$
\sup_{(\I,\phi, z,\bar z) \in B_{s_0,r_0}}\Big( |X_\cR^{(\phi)}|_\infty + |X_\cR^{(\I)}|_1 + r_0^{-1}\| X_\cR^{(z)}\|_{a,p} + r_0^{-1}\| X_\cR^{(\bar z)}\|_{a,p}\Big)\le {\rm C}(\e r_0 + \e^2r_0^{-1}).
 $$

Then we make the analytic symplectic change of coordinates $(J,\vartheta)\rightsquigarrow(\I,\phi)$ defined by
\begin{align}\label{act2}
& J_0= \I_2 \,,\quad J_1= \I_1+ 2 \I_2\,,\quad J_2= \I_{-1}-2 \I_{2} \,,\quad J_3= \I_{-2}+ \I_{2} \nonumber
\\
 & \vt_0=-2\phi_1-\phi_{-2}+2\phi_{-1}+\phi_2 \quad \vt_1=\phi_{1}
\quad \vt_2=\phi_{-1} \quad \vt_3=\phi_{-2}\ .
\end{align}
Note that now 
$J_2$  is  not necessarily positive.
We have
$$
L= J_1+J_2+J_3 +\sum_{j\neq \pm 1,\pm 2} |z_j|^2
$$
$$
M= J_1-J_2-2J_3 +\sum_{j\neq \pm 1,\pm 2}j |z_j|^2
$$
$$
H_2= J_1+J_2+4 J_3 +\sum_{j\neq \pm 1,\pm 2}j^2 |z_j|^2
$$

\begin{align*}
H_{6,0}^{\rm ap} = & 6(J_1+J_2+J_3)^3 -9 (J_1+J_2+J_3)(J_0^2 + (J_1 - 2 J_0)^2 +(J_2 +2 J_0)^2 + (J_3 - J_0)^2) \\
& +4 (J_0^3 + (J_1 - 2 J_0)^3 +(J_2 +2 J_0)^3 + (J_3 - J_0)^3)
\end{align*}
\begin{equation*}
H_{6,0}^{\rm eff} = 18 (J_1 - 2 J_0) (J_2 + 2 J_0) \sqrt{J_0(J_3-J_0)}\cos\vt_0
\end{equation*}
$$
H_{6,2}^{\rm ap}=\bigg(18(J_1 + J_2 + J_3)^2-9(J_0^2 + (J_1 - 2 J_0)^2 +(J_2 +2 J_0)^2 + (J_3 - J_0)^2)\bigg)\left(\sum_{j\notin\SS}|z_j|^2\right)
$$
\begin{align*}
H_{6,2}^{\rm eff}=\; & 72 \sqrt{J_0(J_1 - 2 J_0)(J_2 + 2 J_0)(J_3 - J_0)} {\rm Re}\bigg(z_3 \bar z_{-3} e^{\ii (-\vt_0 +3(\vt_2 -\vt_1))}\bigg)\\
& + 18 J_0(J_3-J_0)  {\rm Re} \bigg(z_4\bar z_{-4} e^{\ii (-2\vt_0 +4(\vt_2 -\vt_1))}\bigg)\ .
\end{align*}
We now write the Hamiltonian in a more compact notation (note that $(J,\vt)\in \R^4\times \T^4$):
$$
\HH_{\mathtt {NLS}}=J_1+J_2+ 4 J_3 + \varepsilon H^{\rm ap}_{6,0}(J)+ \varepsilon H^{\rm eff}_{6,0}(J,\vt_0)+ \sum_{j=3,4}\tilde H_j +\sum_{j\neq \pm 1,\pm 2,\pm 3,\pm 4} (j^2+\varepsilon f(J)) |z_j|^2+\mathcal R
$$
where
$$
\tilde H_j=  (j^2+ \varepsilon f(J)) (|z_j|^2+ |z_{-j}|^2)+ \varepsilon \mathcal U_j(J){\rm Re}(e^{\ii (n_j \vt_0+\ell_j (\vt_2-\vt_1))} z_j \bar z_{-j})
$$
with
\begin{eqnarray}
 f(J)&=& 18(J_1 + J_2 + J_3)^2-9(J_0^2 + (J_1 - 2 J_0)^2 +(J_2 +2 J_0)^2 + (J_3 - J_0)^2)\nonumber\\
 \mathcal U_3&=& 72 \sqrt{J_0(J_1 - 2 J_0)(J_2 + 2 J_0)(J_3 - J_0)}\,,\label{above}\\
  \mathcal U_4&=& 18 J_0(J_3-J_0)\nonumber 
\end{eqnarray}
and finally
$$
n_3= -1 \,,\quad \ell_3= 3 \,,\quad n_4= -2 \,,\quad \ell_4= 4.
$$
We first make a change of variables which removes the the phases in $\tilde H_j$.
We set
\begin{align}\label{phase1}
& w_{ -j}= e^{-\ii (n_j \vt_0+\ell_j (\vt_2-\vt_1))}z_{ -j} \,,j=3,4 \quad w_j=z_j, \ |j|\geq5\\
& p= J_0-  \sum_{j=3,4} n_j|z_{-j}|^2\,,\;\; q =\vt_0, \nonumber\\
& K_1= J_1- \sum_{j=3,4} \ell_j|z_{-j}|^2, \quad
K_2= J_2+ \sum_{j=3,4} \ell_j|z_{-j}|^2, \quad
K_3=J_3  \nonumber
\end{align}
and we get
$$
\mathbb L= K_1 + K_2 + K_3 +\sum_{j\neq \pm 1,\pm2}|w_j|^2\,,$$ 
$$\mathbb M= K_1-K_2-2K_3+  \sum_{j=3,4} j(|w_j|^2+|w_{-j}|^2)+\sum_{j\neq \pm 1,\pm2,\pm 3,\pm 4}j|w_j|^2
$$
Now the conjugate variables are $(p,q), (K,\vt)$.
Substituting we get the Hamiltonian
\begin{equation}\label{camel}
\HH_{\mathtt {NLS}}= \mathcalligra H\;(p,K,q)+ \sum_{j=3,4}H_j(p,K,q,w_{\pm j}) +\sum_{j\neq \pm 1,\pm 2,\pm 3,\pm 4} (j^2+ \varepsilon f(p,K)) |w_j|^2 +\mathcal R.
\end{equation}
Now $\mathcal R$ contains some new terms of degree at least four in $w_{\pm 3}, w_{\pm 4}$. Here 
$$
\mathcalligra H\;(p,K,q)= A(K,p)+ B(K,p)\cos q
$$
with
\begin{equation}\label{ABnormal}
A(K,p)=K_1+K_2+4K_3+\varepsilon H_{6,0}^{\rm ap}(p,K), \quad
B(K,p)\cos q=\varepsilon H_{6,0}^{\rm eff}(p,K,q)
\end{equation}
while
$$
H_j= (j^2+ \varepsilon f(p,K)) (|w_j|^2+ |w_{-j}|^2)+ \varepsilon\mathcal U_j(p,K){\rm Re}( w_j \bar w_{-j})+\mathtt V_j(p,K,q)|w_{-j}|^2 
$$
with $f,\mathcal U_j$ defined in \eqref{above} and
$$
\mathtt V_j= \varepsilon \mathcal V_j:=  \ell_j \bigg(\partial_{K_1}\mathcalligra H -\partial_{K_2}\mathcalligra H\;\; \bigg)
+ n_j \partial_p \mathcalligra H\;.$$
Note that all the changes of  variables are analytic provided that we require that $r_0$ is sufficiently small and we fix $|w|_{a,p}\le r$ so that $|z|_{a,p}\le r e^{6s}<r_0$. The bounds on $\cR$ and on its $\partial_K$ derivatives remain the same.

\section{The integrable invariant subspace}\label{quattro}

We now restrict to the invariant subspace $\{z_j=0\}$. Our Hamiltonian is of the form
$$
\mathcalligra H\;= A(p,K)+B(p,K)\cos q
$$
with $A,B$ defined in \eqref{ABnormal}. The  corresponding dynamical system:
\begin{eqnarray}\label{pqSyst}
\begin{cases}
\dot q = \partial_p A+ \partial_p B \cos q  \\
\dot p = B\sin q
\end{cases}
\end{eqnarray}
 has been  studied in detail in \cite{GT}, here we group some facts that we will need. Let us introduce the rescaled time $\tau=\varepsilon t$, so that the system \eqref{pqSyst} in the rescaled time is $\varepsilon$-independent.
\begin{lemma}
There exists a neighborhood ${\mathcal B} \subset \R^3$ of $K_\star=(4,0,2)$ such that the following holds:
\begin{itemize}
\item[(i)] there exists an $\varepsilon$-independent analytic function $\mathfrak p: \cal B \to \R$ with $\mathfrak p(K_\star)=1$ such that $q=0, p={\mathfrak p}(K)$ is a stable fixed point for \eqref{pqSyst};
\item[(ii)] there exists an $\varepsilon$-independent analytic function $\mathfrak P: \cal B \to \R$ with $\mathfrak P(K_\star)=1$ such that $q=\pi, p={\mathfrak P}(K)$ is an unstable fixed point for \eqref{pqSyst};
\item[(iii)] up to the $2\pi$-periodicity in $q$, these are the only fixed points of the system \eqref{pqSyst} and the phase portrait is qualitatively the same as in Figure \ref{vaffa2};
\item[(iv)] for $K=K_\star$, the two homoclinic connections linking the unstable fixed points to itself intersect the axis $q=0$ at two points with $p=p^{(1)}, p=p^{(2)}$, satisfying $p^{(1)}+p^{(2)}=2$ and
\begin{equation}\label{quantobatte}
p^{(1)} < \frac12, \qquad p^{(2)} > \frac32.
\end{equation}
\end{itemize}
\end{lemma}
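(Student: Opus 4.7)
The plan is to exploit an algebraic simplification that occurs at $K = K_\star$ and then propagate by the implicit function theorem. Rescaling $\tau = \varepsilon t$ absorbs the $\varepsilon$ prefactor, and the $(p,q)$-independent linear piece $K_1 + K_2 + 4 K_3$ of $A$ does not contribute to the flow, so we may work with $\tilde A(p,K) + \tilde B(p,K)\cos q$, where $\tilde A = H_{6,0}^{\rm ap}$ and $\tilde B = 18(K_1 - 2p)(K_2 + 2p)\sqrt{p(K_3 - p)}$. The decisive observation at $K = K_\star = (4,0,2)$ is the identity $(4 - 2p)^3 = 8(2-p)^3$, which collapses the potentials to
\begin{equation*}
\tilde A(p, K_\star) = 504 + 324\, g(p), \qquad \tilde B(p, K_\star) = 72\, g(p)^{3/2}, \qquad g(p) := p(2-p),
\end{equation*}
so that at $K_\star$ the entire Hamiltonian is manifestly invariant under $p \mapsto 2 - p$.

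For (i)--(ii) I would first locate the fixed points at $K = K_\star$. Since $\tilde B > 0$ in the interior $p \in (0,2)$, the equation $\dot p = 0$ forces $q \in \{0, \pi\}$, and $\dot q = 0$ reduces to $\partial_p(\tilde A \pm \tilde B) = 0$. Both equations vanish at $p = 1$ because $g'(1) = 0$, and a short computation yielding $\tilde A''(1) = -648$, $\tilde B''(1) = -216$, $\tilde B(1) = 72$ shows that the Hessian of $\tilde A + \tilde B\cos q$ is diagonal with entries $(-864, -72)$ at $(q,p) = (0,1)$ (negative definite, hence elliptic) and $(-432, 72)$ at $(\pi, 1)$ (indefinite, hence hyperbolic). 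The non-degeneracy of $\partial_p^2(A \pm B)$ at $(K_\star, 1)$ then lets the implicit function theorem produce the claimed analytic functions $\mathfrak p(K)$ and $\mathfrak P(K)$ on a neighborhood $\mathcal B$, and the stable/unstable type is preserved on $\mathcal B$ by continuity of the Hessian signature.

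For (iii), the system is an integrable Hamiltonian flow on the cylinder, so the level sets of $\mathcalligra H\;$ determine the phase portrait. Global uniqueness of the critical points in the interior --- in my view the main obstacle --- is read off at $K_\star$ from the $g$-representation (the scalar maps $u \mapsto 504 + 324 u \pm 72 u^{3/2}$ each have a single interior critical point) and propagated to $\mathcal B$ by compactness, or one may cite the analysis of \cite{GT}. Combined with the $q \mapsto -q$ symmetry of $\mathcalligra H\;$, which forces the two separatrices of the saddle to close up as homoclinic loops rather than to wind around the cylinder, this fixes the phase portrait as in Figure \ref{vaffa2}.

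Finally for (iv), at $K = K_\star$ the energy of the saddle is $\tilde A(1) - \tilde B(1) = 756$, so the intersections of the homoclinic loops with the axis $q = 0$ satisfy $\tilde A(p) + \tilde B(p) = 756$, that is, with $u := g(p)$,
\begin{equation*}
9 u + 2 u^{3/2} = 7.
\end{equation*}
The relation $p^{(1)} + p^{(2)} = 2$ is immediate from the $p \mapsto 2-p$ symmetry. For the bound $p^{(1)} < 1/2$, one observes that $\Phi(u) := 9u + 2 u^{3/2}$ is strictly increasing on $[0, \infty)$ and that $\Phi(3/4) = \tfrac{27 + 3\sqrt 3}{4} > 7$; this forces $g(p^{(1)}) < 3/4$, which translates to $p^{(1)} < 1/2$ (and correspondingly $p^{(2)} > 3/2$). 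Everything beyond the uniqueness step reduces to one-variable calculus and the implicit function theorem applied at a single, explicit point.
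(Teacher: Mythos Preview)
Your proof is correct and follows the same strategy as the paper: explicit computation at $K_\star$, then propagation by the implicit function theorem. Your parametrisation through $g(p)=p(2-p)$ is a genuine improvement in presentation --- it makes the $p\mapsto 2-p$ symmetry manifest and collapses the Hessian and level-set computations to one-variable calculus, whereas the paper just writes out the raw polynomial in $p$ and appeals to ``direct computation''. For item~(iv) the paper simply cites \cite{GT}, while you give a self-contained argument via $\Phi(u)=9u+2u^{3/2}$; your bound $\Phi(3/4)=(27+3\sqrt3)/4>7$ does the job cleanly.

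One small wording issue in (iii): you write that the scalar maps $u\mapsto 504+324u\pm 72u^{3/2}$ ``each have a single interior critical point'', but in fact on $u\in(0,1]$ both are \emph{strictly monotone} (the derivative $324\pm108\sqrt u$ never vanishes there). That monotonicity is precisely what forces the only critical point in $p$ to be at $g'(p)=0$, i.e.\ $p=1$ --- which is the conclusion you want. So the argument is fine, only the sentence should say ``are monotone'' rather than ``have a single interior critical point''. The propagation of global uniqueness to $K$ near $K_\star$ is handled in the paper by a one-line appeal to Morse theory, so your ``compactness or cite \cite{GT}'' is at the same level of detail.
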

\begin{proof} The system is $2\pi$-periodic in $q$, direct computation shows that for $K=K_\star=(4,0,2)$
$$
\mathcalligra H\;= 1308 +\varepsilon \bigg( -270 (p^2 + (2-p)^2) +36 (p^3 + (2-p)^3)  + 72 p^{3/2}(2-p)^{3/2}\cos q\bigg)
$$
there is a stable fixed point at $q=0,p=1$ (a non degenerate maximum for the Hamiltonian) corresponding to a periodic solution and a non degenerate unstable fixed point $q=\pi, p=1 $. 

Then by the Implicit Function Theorem we construct $\mathfrak p(K)$  by solving the equation
\begin{equation}\label{col piu}
\partial_p A+ \partial_p B =0
\end{equation}
for $p={\mathfrak p}(K)$ in a neighborhood of $K=K_\star, p=1$.
In the same way, since $B(p,K)\neq 0$ then we obtain $\mathfrak P$  by solving the equation
\begin{equation}\label{col meno}
\partial_p A- \partial_p B =0.
\end{equation}
Note that $\varepsilon$ factorizes in equations \eqref{col piu}, \eqref{col meno}.
The qualitative structure of the phase portrait follows by Morse theory.
Finally, formula \eqref{quantobatte} is obtained by direct computation (see the evaluation of $\kappa_\star$ in Section 4.1 of \cite{GT}).
\end{proof}

 \begin{figure}[!ht]
    \centering
   \begin{minipage}[b]{11cm}
   \centering
   {\psfrag{I}{$p$}
   \psfrag{a}[c]{$-\pi$}
   \psfrag{b}[c]{$\pi$}
   \psfrag{f}[r]{$q$}
   \includegraphics[width=11cm]{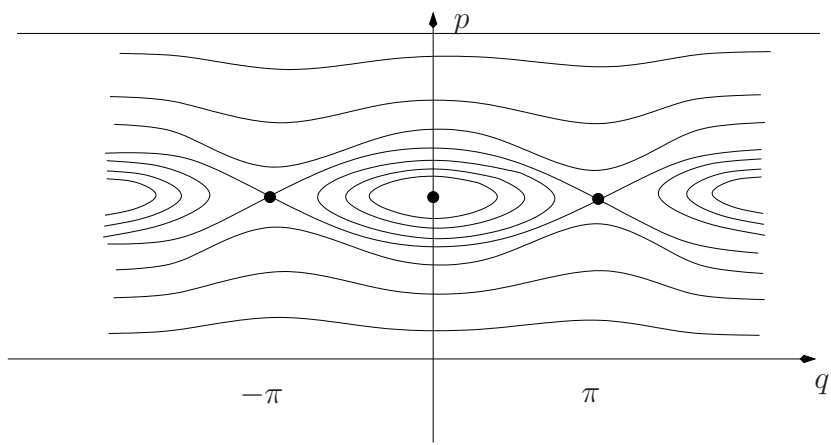}
   }
   \caption{\footnotesize{The phase portrait of $\mathcalligra H$\; at some value $K\in \cal B$.}}\label{vaffa2}
   \end{minipage}
   \end{figure}
%

By the previous Lemma for all $K\in \mathcal B$ we have an open domain delimited by the heteroclinic connections where the motion is oscillatory in $p,q$. Then in  all of this domain minus the  stable fixed point we can construct an $\e$-independent  symplectic change of variables which conjugates the system to action angles 
\begin{align}\label{integro}
&(p,K;q,\vt)\stackrel{\Psi}{\longrightarrow }(E,K;\psi,\varphi): \quad \mathcalligra H\;\circ \Psi = H(E,K,\e)
\\
& p=p(E,K,\varphi)\,,\quad q=q(E,K,\varphi)\,,\quad \vt= \psi+ \mathcal F(E,K,\varphi)\nonumber
\end{align}
note that $L,M$ do not depend on $p$ (nor on the angles) hence $L(K)\circ \Psi=L(K)$ and the same for $M$.
 %
%
%

We denote by $\mathcalligra D\;$ the (open) domain of definition of $H$.
\begin{lemma}\label{twistlemma}
 The Hamiltonian $H(E,K)$ is real-analytic in $\mathcalligra D\;$moreover there exists a proper algebraic hypersurface $\mathcal Z$ such that for all $E,K\in \mathcalligra D \setminus \mathcal Z$ the determinant of the Hessian matrix  $\partial_{E,K}^2 H(E,K)$ is not zero.
\end{lemma}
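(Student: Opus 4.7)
The plan is to verify real-analyticity of $H(E,K)$ on $\mathcalligra D\;$ and then to show that its Hessian determinant does not vanish identically; the existence of $\mathcal Z$ then follows from the coherence principle for real-analytic functions (the word ``algebraic'' in the statement is to be read as ``real-analytic'' here, reflecting the polynomial character of $\mathcalligra H\;$).

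\smallskip
\noindent\textbf{Step 1: Analyticity of $H$.} I would build $H$ via the classical one-degree-of-freedom action--angle construction. On $\mathcalligra D\;$ the level sets $\{\mathcalligra H\;(p,q,K)=H_0\}$ are smooth closed librational orbits encircling $(p,q)=(\mathfrak p(K),0)$. Define
\[
E(H_0,K):=\frac{1}{2\pi}\oint_{\mathcalligra H\;=H_0}p\,dq.
\]
Since $\mathcalligra H\;$ is polynomial in $p$, trigonometric-polynomial in $q$, and polynomial in $K$, the implicit function theorem produces analytic turning points of each orbit, and the area integral $E(H_0,K)$ is real-analytic on the libration region. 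Because $\partial_{H_0}E=T/(2\pi)>0$, one inverts locally to recover $H(E,K)$ real-analytic on $\mathcalligra D\;$.

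\smallskip
\noindent\textbf{Step 2: $D:=\det\partial^2_{E,K}H$ is not identically zero.} I would work near the stable equilibrium $(p,q)=(\mathfrak p(K),0)$ at the reference value $K_\star=(4,0,2)$, where $\mathcalligra H\;$ has the explicit form displayed in the previous lemma. Shifting to canonical coordinates centered at $(1,0)$ and applying the standard analytic Birkhoff normal form for a one-degree-of-freedom elliptic equilibrium yields
\[
H(E,K)=h_0(K)+\omega(K)E+\tfrac12\tau(K)E^2+O(E^3),
\]
uniformly for $K$ in a neighborhood of $K_\star$, with $h_0,\omega,\tau$ analytic in $K$ and explicitly computable from the quadratic and quartic parts of $\mathcalligra H\;$ at $(1,0)$. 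Letting $E\to 0^+$ the full $4\times 4$ Hessian becomes
\[
\partial^2_{E,K}H\big|_{E\to 0^+}=\begin{pmatrix}\tau(K) & \nabla_K\omega(K)^T\\ \nabla_K\omega(K) & \nabla^2_K h_0(K)\end{pmatrix},
\]
and a direct numerical evaluation at $K=K_\star$ (using \eqref{ABnormal}) verifies that its determinant is nonzero. Since $D$ is real-analytic, this is enough to guarantee $D\not\equiv 0$ on the component of $\mathcalligra D\;$ reaching the equilibrium; its zero locus is then the desired proper analytic hypersurface $\mathcal Z$.

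\smallskip
\noindent\textbf{Main obstacle.} The non-routine part is the explicit computation of $\tau(K_\star)$ from the quartic Birkhoff normal form of $\mathcalligra H\;$ at $(p,q)=(1,0)$, $K=K_\star$; this is algebraically tedious but elementary. A qualitative alternative avoiding normal-form bookkeeping is to use the separatrix limit: as $E\to E_{\rm sep}(K)^-$ the frequency $\omega=\partial_E H$ tends to zero logarithmically, controlled by the hyperbolic exponent $\lambda(K)$ at $(p,q)=(\mathfrak P(K),\pi)$, and a careful comparison of the leading divergences in the Schur complement $\partial^2_E H-(\partial_E\nabla_K H)^T(\nabla^2_K H)^{-1}(\partial_E\nabla_K H)$ forces $|D|\to\infty$; the trade-off is that one must balance competing logarithmic singularities, whereas the stable-equilibrium argument reduces to a finite numerical check.
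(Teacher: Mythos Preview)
Your approach is essentially the paper's: both establish real-analyticity via the action--angle construction together with the convergent Birkhoff normal form at the elliptic fixed point (the paper cites Vey's theorem for this), and both verify nondegeneracy by evaluating the $4\times4$ Hessian at $E=0$, $K=K_\star$ via the quartic Birkhoff coefficient. One simplification the paper exploits that addresses your ``main obstacle'': at $K_\star=(4,0,2)$ the cubic Birkhoff coefficients $\beta^{(3)}_{l,m}(K_\star)$ happen to vanish, so the twist coefficient $\gamma^{(4)}_{2,2}(K_\star)$ equals the naive quartic coefficient $\beta^{(4)}_{2,2}(K_\star)$ with no correction from the cubic-removal step, reducing the ``tedious'' computation to a direct evaluation.
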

\begin{proof} We first note that the change of variables  is {\bf real} analytic in  $\mathcalligra D\;$, in particular  for each $K$ in a neighborhood of $K_\star$ the $p,q$ are analytic in $E$ in the whole of the region delimited by the two separatrices, excluding the fixed point.
We wish to compute the twist i.e. $\partial_{E,K}^2 H(E,K)$ and show that its determinant is a non-identically zero analytic function.
We verify this near the stable fixed point, where the computations are explicit.

By Vey's theorem the Birkhoff normal form at the fixed point converges, namely there exists a symplectic change of variables
$$\mathtt P= \mathtt P(p-{\mathfrak p}(K),q,K)\,,\quad \mathtt Q= \mathtt Q(p-{\mathfrak p}(K),q,K) \,,\quad \phi= \vt + \Phi(p-{\mathfrak p}(K),q,K)  $$ which conjugates the Hamiltonian $\mathcalligra H\;$ to $H(\mathtt P^2+\mathtt Q^2,K)$  in a neighborhood of $\mathtt P,\mathtt Q=0$. 
Note that  the variables $K$ play the r\^{o}le of parameters for $\mathcalligra H\;$. Hence we first  construct  symplectic changes of variables for $p,q$ (depending parametrically on $K$) at the end of the procedure we complete the symplectic change of variables by adjusting the angles conjugated to $K$.
   Since the change of variable is analytic and the actions are uniquely defined we must have that $\mathtt P^2+\mathtt Q^2=E$ and the Birkhoff map  gives an analytic extension to zero. So we may compute the $E,K$ derivatives of $H$ from  the Birkhoff normal form at $E=0,K=K_\star$.

We start by Taylor expanding in $p,q$ the Hamiltonian   at the point $({\mathfrak p}, 0)$ up to order four. We  denote $G= A+B$, see formula \eqref{ABnormal}. Given a function $f(K,p)$ we write\footnote{ Note that the application $f\mapsto f^\sharp$ does not commute with the derivatives. We denote $f^\sharp_K := (f_K)^\sharp $ and $\partial_K f^\sharp := \partial_K (f^\sharp)$, similarly for the derivatives with respect to $p$.} $f^{\sharp} (K):=f (K,\mathfrak p(K))$.  
\begin{align*}
\mathcalligra H\;\,(p,q)=\; &G^\sharp + \frac 12   G_{pp}^\sharp (p-{\mathfrak p})^2 -\frac 12  B^\sharp q^2 +\frac 1 6  G_{ppp}^\sharp(p-{\mathfrak p})^3 - \frac 12  B_p^\sharp (p-{\mathfrak p}) q^2\\
&+ \frac{1}{24}B^\sharp q^4- \frac14  B_{pp}^\sharp (p-{\mathfrak p})^2 q^2 + \frac{1}{24}G_{pppp}^\sharp (p-{\mathfrak p})^4 + \mathcal O\left((|q|+ |p-{\mathfrak p}|)^5\right).
\end{align*}
Note that $\e^{-1}(\mathcalligra H\, - G^\sharp)$ is $\e$-independent, while $G^\sharp$ is of order $O(1)$ in $\e$. It follows that the Birkhoff changes of coordinates that we shall perform are all $\e$-independent.
First we symmetrize the quadratic terms and translate the critical point to zero.
$$
P= \lambda (p-{\mathfrak p})\,,\quad Q= q \lambda^{-1} \,,\quad \lambda^4:= -\frac{G^\sharp_{pp} }{B^\sharp}
$$
(recall that $G_{pp}^\sharp<0$ and $B^\sharp>0$   in a neighborhood of $K_\star$) so,  setting $$\alpha^{(0)}= G^\sharp\,,\quad \alpha^{(2)}:= \sqrt{ - B^\sharp  G^\sharp_{pp}} \,,$$ we get 
$$
\mathcalligra H\;=\alpha^{(0)} - \frac 12  \alpha^{(2)} (P^2+Q^2) +\frac 1 6  G^\sharp_{ppp} \lambda^{-3} P^3 - \frac 1 2  B^\sharp_p \lambda P Q^2 +.
$$
$$
+ \frac{1}{24}B^\sharp \lambda^4 Q^4- \frac14 B^\sharp_{pp} P^2 Q^2 + \frac{1}{24}G_{pppp}^\sharp \lambda^{-4} P^4 +\mathcal O \left(( |P|+|Q|)^5\right)
$$
It is convenient to pass to complex notation in order to perform the Birkhoff normal form,
let us write the Hamiltonian (neglecting higher order terms) as
$$
\mathcalligra H= \alpha^{(0)} - \frac 12  \alpha^{(2)}  (P^2+Q^2)+ \sum_{h=3,4}\sum_{i+2j=h} \alpha^{(h)}_{i,j}P^i Q^{2j}
$$
where
$$
\alpha^{(3)}_{3,0}= \frac 1 6  G^\sharp_{ppp} \lambda^{-3} \,,\quad \alpha^{(3)}_{1,1}=  - \frac 1 2 B^\sharp_{p} \lambda\,,
$$
and
$$
\alpha^{(4)}_{4,0}= \frac{1}{24}G_{pppp}^\sharp \lambda^{-4}\,,\quad \alpha^{(4)}_{2,1}= - \frac {1}{4}  B^\sharp_{pp} \,,\quad \alpha^{(4)}_{0,2}=  \frac{1}{24}B^\sharp \lambda^4.
$$
we set
$\sqrt{2}  z= P+\mathrm i Q$, so that
$$
\mathcalligra H= \alpha^{(0)}- \alpha^{(2)}|z|^2+ \sum_{h=3,4}\sum_{i+2j=h} (-1)^j \frac{\alpha^{(h)}_{i,j}}{\sqrt{2}^h}(z+\bar z)^i(z-\bar z)^{2j}
$$
$$
=\alpha^{(0)}- \alpha^{(2)}|z|^2+ \sum_{h=3,4}\sum_{l+m=h}\beta^{(h)}_{l,m}z^l \bar z^m
$$
with
$$
\beta^{(l+m)}_{l,m}=  \frac{1}{\sqrt{2}^{l+m}}\sum_{i+2j=l+m} \sum_{\begin{subarray}{c} 0\leq a\leq i \\
0\leq b\leq 2j \\ a+b= l \end{subarray}} (-1)^j \alpha^{(l+m)}_{i,j} \binom{i}{a}\binom{2j}{b}\ .
$$
In particular we compute
$$
\beta^{(4)}_{2,2}
=\frac32(\alpha^{(4)}_{0,2}-\alpha^{(4)}_{2,1}+\alpha^{(4)}_{4,0})=\frac{1}{16} B^\sharp \lambda^4 + \frac38 B_{pp}^\sharp + \frac{1}{16} G_{pppp}^\sharp \lambda^{-4}\ .
$$
Direct computations show that $\beta_{l,m}^{(3)}(K_\star)=0$.

Now we need to remove the terms of order three: we perform a change of variables with generating function
$$F= \sum_{l+m=3} F_{l,m}(K)z^l \bar z^m $$ with $F(K_\star)=0$
which cancels the terms of degree three.
We have
\begin{equation}\label{change-ord3}
F_{l,m}= \frac{\beta^{(3)}_{l,m}}{i \alpha^{(2)} (l-m)}
\end{equation}
note that since $l+m=3$ then $l-m$ cannot be zero.

This gives the new Hamiltonian with new terms of order four
$$ 
\mathcalligra H= \alpha^{(0)}- \alpha^{(2)}|z|^2+ \sum_{l+m=4}\gamma^{(4)}_{l,m}z^l \bar z^m+ h.o.t.
$$
and we know that 
\begin{equation}\label{mizzi}\gamma^{(4)}_{l,m}(K_\star)= \beta^{(4)}_{l,m}(K_\star). \end{equation}
since $F(K_\star)=0$.
Now we remove the terms of order $4$ which are not in the kernel. By definition
the kernel is the part depending only on $|z|^2$ i.e.
$\gamma^{(4)}_{2,2}$.  This procedure may be repeated indefinitely and, by Vey's Theorem, it converges to
$H(|z|^2,K)$. By construction: 
$$
H(|z|^2,K)=  \alpha^{(0)}(K)- \alpha^{(2)}(K) |z|^2+ \gamma^{(4)}_{2,2}(K)|z|^4 + h.o.t.
$$ 
Thus the twist matrix $\partial^2_{K,L} H(L,K)$ evaluated at $L=0$ is
$$
M(K):= \begin{pmatrix} \partial_{KK} \alpha^{(0)} & -\partial_{K}\alpha^{(2)} \\ -(\partial_{K}\alpha^{(2)})^T &  2\gamma^{(4)}_{2,2}
\end{pmatrix}.
$$
Note that $\e^{-1}M(K)$ is $\e$-independent.
We now compute the determinant of this matrix  at $K_\star$. Note that, since we are evaluating the matrix at $K_\star$, we do not need to compute explicitly the corrections to the order four given by the change of variables \eqref{change-ord3}, see formula \eqref{mizzi}. Direct computations show that $\det M(K_\star) \neq0$, thus the twist condition is fulfilled outside the zero set $\mathcal Z$ of an analytic function of $(E,K)\in\mathbb C^4$.
\end{proof}
\medskip

As we have mentioned before, $H(E,K)$ is real-analytic in $\mathcalligra D\;$ and satisfies the twist conditions outside $\mathcal Z$.  
Now we choose a point in $\mathcalligra D\;$ where we wish to prove persistence of tori by setting
\begin{equation}\label{parametri}
(E,K)=\xi+y=(\xi_0,\xi_1,\xi_2,\xi_3)+(y_0,y_1,y_2,y_3),
\end{equation}
where $\xi$ are parameters and 
with $y$ is in some  neighborhood $B_{r^2}$ of the origin  so that  the action $\xi+ r^2$ belongs to $\mathcalligra D\,$. We sistematically use the notation $$\mathtt F(\xi,\varphi)= \mathtt F(p(\xi_0,\xi_1\xi_2,\xi_3,\varphi),\xi_1,\xi_2,\xi_3,q(\xi_0,\xi_1\xi_2,\xi_3,\varphi)).$$

Now the Hamiltonian is
\begin{equation}\label{quasi}
\HH_{\mathtt {NLS}}=\muu (\xi)\cdot y+ \sum_{j=3,4}\mathcal H_j(\xi,\varphi,w_{\pm j})  +\sum_{j\neq \pm 1,\pm 2,\pm 3,\pm 4} (j^2+ \varepsilon f(\xi,\varphi) )|w_j|^2 + \mathcal R
\end{equation}
where 
$$
\muu(\xi) = \partial_\xi H (\xi) =: (0,1,1,4) + \varepsilon \laa(\xi)
$$
$$
\mathcal H_j= (j^2+ \varepsilon f(\xi,\varphi) )| (|w_j|^2+ |w_{-j}|^2)+ \varepsilon \mathcal U_j(\xi,\varphi){\rm Re}( w_j \bar w_{-j}) +\varepsilon \mathcal V_j(\xi,\varphi)|w_{-j}|^2 
$$
Note that we have grouped in $\mathcal R$ all the terms which are of degree higher than one in $y$ or greater that two in $w$. 
As before the change of variables is analytic in all its entries and we have  the bounds 
\begin{equation}\label{prisma}
|\cR|_{s,r,\cO}^1 \le {\mathtt C} (\e r + \frac{\e^2}{r^2})
\end{equation}
provided that we choose for $\cO$ a compact domain in $\mathcalligra D$ and choose $\e,r$ small enough, in particular much smaller that the distance between $\cO$ and the border of $ \mathcalligra D$.

The   term $\e^2 r^{-2}$  is due to  the  scaling of the domain in the $y$-component of the Hamiltonian vector field.

This change of variables gives for the constants of motion
$$
\mathbb L=  y_1 + y_2+y_3 +\sum_{j\neq \pm 1,\pm2}|w_j|^2\,,$$ 
$$\mathbb M= y_1-y_2-2y_3 +\sum_{j=3, 4}j(|w_j|^2+|w_{-j}|^2)+  \sum_{j\neq \pm 1,\pm2,\pm 3,\pm 4}j|w_j|^2
$$
We wish to reduce the ``normal form'' above to constant coefficients. First we remove the $\varphi$ dependence in $w_j$ for $j\neq \pm 1,\pm 2,\pm 3,\pm 4$.
This is 
done by performing the symplectic change of variables
\begin{equation}\label{phase2}
 y_0^{(\rm {new})}= y_0+ \frac{f(\xi,\varphi)-f_0(\xi) }{\laa_0(\xi)}  \sum_{j\neq \pm 1,\pm 2}|w_j|^2\,,\quad w_j^{(\rm {new})} =  w_j\exp\left(  \frac{-\ii \partial_\varphi^{-1}f(\xi,\varphi)}{\laa_0(\xi)}\right) \,,
\end{equation}
here $f_0(\xi)$ denotes the average of $f$ in $\varphi$ while $\partial_\varphi^{-1} f$ is the zero average primitive of  
$f- f_0 $.
Since $f$ is real this gives a phase shift (leaves each $|w_j|^2$ unchanged) and hence also the mass and momentum are unchanged.
 
Dropping the superscript ${}^{(\rm{ new})}$  the Hamiltonian is:
$$
\HH_{\mathtt {NLS}}=\muu (\xi)\cdot y+ \sum_{j=3,4}\widetilde{\mathcal H}_j(\xi,\varphi,w_{\pm j})  +\sum_{j\neq \pm 1,\pm 2,\pm 3,\pm 4} (j^2+ \varepsilon f_0(\xi) )|w_j|^2 + \mathcal R
$$
with
$$
\widetilde{\mathcal H}_j= (j^2+ \varepsilon f_0(\xi) ) (|w_j|^2+ |w_{-j}|^2)+ \varepsilon \mathcal U_j(\xi,\varphi){\rm Re}( w_j \bar w_{-j}) + \varepsilon \mathcal V_j(\xi,\varphi)|w_{-j}|^2, \quad j=3,4.
$$
Possibly restricting to a smaller set $\cO$, in order to assure that $\laa_0(\xi)$ is bounded away from zero
we have for $\cR$ the same bounds \eqref{prisma}.\\ 
Now the dependence on $\varphi$ appears only in the finite dimensional blocks given by $\widetilde{\mathcal H}_3,\widetilde{\mathcal H}_4$, where we reduce to constant coefficients by using Floquet's theorem. 
Consider the Hamiltonian
$$
H_{\mathtt{Fl}}=\e \laa_0 y_0 + \widetilde{\mathcal H}_3(\xi,\varphi,w_{\pm 3})+\widetilde{\mathcal H}_4(\xi,\varphi,w_{\pm 4})
$$
Direct computation shows that the Hamiltonian flow  preserves the quantities $|w_j|^2+|w_{-j}|^2$ for $j=3,4$. Since the reduction procedure is the same for $j=3,4$, we explicitly perform it on one block.

The Hamilton equations for $\mathtt w_j=(w_j, w_{-j})$ have the $2 \times 2$ block structure
\begin{equation}\label{ablocchi}
\begin{pmatrix}
\dot{\mathtt w_j} \\ \dot{\bar{\mathtt w}}_j
\end{pmatrix}
= \ii j^2 \begin{pmatrix}
\mathtt w_j \\ \bar{\mathtt w}_j
\end{pmatrix}+ \e
\begin{pmatrix}
A_j(\xi,\varphi) & 0 \\ 0 & \overline{A_j (\xi,\varphi)}
\end{pmatrix}
\begin{pmatrix}
\mathtt w_j \\ \bar{\mathtt w}_j
\end{pmatrix} ,
\end{equation}
where the $2\times 2$ matrix
\begin{equation}\label{AAA}
A_j(\xi,\varphi) := \ii  \begin{pmatrix}
f_0 & \mathcal U_j \\  \mathcal U_j &  f_0+\mathcal V_j
\end{pmatrix}
\end{equation}
is skew-Hermitian and $\overline{A _j(\xi,\varphi)}$ denotes the entry-wise complex conjugate of $A_j(\xi,\varphi)$. In other words, the two Lagrangian subspaces $\mathtt w_j=0$, $\bar{\mathtt w}_j=0$ are invariant, 
the system is decoupled into
\begin{equation}\label{fiocco1}
\dot \varphi= \e \laa_0\,,\quad \dot{\mathtt w}_j = (\ii j^2\mathbb 1+\e A_j(\xi,\varphi)) \mathtt w_j
\,,\quad \dot{\bar{\mathtt w}}_j =( -\ii j^2\mathbb 1 +\e \overline{A_j (\xi,\varphi)}) \bar{\mathtt w}_j\ ,
\end{equation}
By variation of constants we may set $\mathtt w_j= \mathtt z_j e^{\ii j^2 t}$ so that the equations for $\mathtt z_j$  are
\begin{equation}\label{fiocco1bis}
\dot \varphi= \e \laa_0\,,\quad \dot{\mathtt z}_j = \e A_j(\xi,\varphi) \mathtt z_j
\,,\quad \dot{\bar{\mathtt z}}_j =\e \overline{A_j (\xi,\varphi)}\bar{\mathtt z}_j\ ,
\end{equation}
which is still a Hamiltonian system with Hamiltonian 
$$
H_{\mathtt{Fl}}=\e\left( \laa_0 y_0 + \sum_{j=3,4}  f_0(\xi)  (|w_j|^2+ |w_{-j}|^2)+  \mathcal U_j(\xi,\varphi){\rm Re}( w_j \bar w_{-j}) + \varepsilon \mathcal V_j(\xi,\varphi)|w_{-j}|^2 \right).
$$
Let us study this system and rescale the time to $\tau= \e t$.
\begin{theorem}[Floquet's theorem]\label{Floquet.complex}
Consider a complex linear $n\times n$ dynamical system $\dot x= A(\tau ) x$ where the matrix $A$ is periodic of period $T$. Let $X(\tau )$ be the fundamental matrix solution. Then 
$$
X(\tau )= P(\tau ) e^{\tau  B}
$$ 
where the $n\times n$ matrices $P(\tau )$ and $B$ satisfy:
\begin{itemize}
\item $P(\tau )$ is invertible and periodic of period $T$.
\item $B$ is time independent and satisfies $X(T)= e^{T B}$.
\end{itemize}

\end{theorem}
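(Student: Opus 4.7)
The plan is to proceed by the classical ``monodromy plus logarithm'' argument, which in the complex setting is particularly clean. First I would observe that, because $A(\tau)$ is $T$-periodic, the matrix-valued function $\tau \mapsto X(\tau+T)$ also satisfies the linear system $\dot Y = A(\tau) Y$. Hence by uniqueness of solutions of linear ODEs, $X(\tau+T) = X(\tau) M$ for some constant matrix $M$, and evaluating at $\tau = 0$ (using that we may normalize $X(0) = \mathbb 1$) gives $M = X(T)$. The monodromy $X(T)$ is invertible: indeed Liouville's formula
\begin{equation*}
\det X(\tau) = \exp\!\left( \int_0^\tau \mathrm{tr}\, A(s)\, ds \right)
\end{equation*}
shows $\det X(\tau) \neq 0$ for every $\tau$, so in particular $\det X(T) \neq 0$.

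Next I would invoke the fact that every invertible complex matrix admits a complex logarithm: reducing $X(T)$ to Jordan normal form $X(T) = S J S^{-1}$, each Jordan block $J_k$ with nonzero eigenvalue $\mu_k$ can be written as $\mu_k(\mathbb 1 + N_k)$ with $N_k$ nilpotent, and
\begin{equation*}
\log J_k = (\log \mu_k) \mathbb 1 + \sum_{m\geq 1} \frac{(-1)^{m+1}}{m} N_k^m
\end{equation*}
is a finite sum defining a logarithm. Setting $B := T^{-1} S (\log J) S^{-1}$ yields a matrix with $e^{TB} = X(T)$. This is really the one place where the complex hypothesis is used; in the real case one would have to pass to period $2T$ to avoid possibly complex eigenvalues.

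Having $B$ in hand, I would then define
\begin{equation*}
P(\tau) := X(\tau) e^{-\tau B},
\end{equation*}
which is manifestly invertible as a product of invertible matrices. To check $T$-periodicity,
\begin{equation*}
P(\tau+T) = X(\tau+T) e^{-(\tau+T)B} = X(\tau)\, X(T)\, e^{-TB} e^{-\tau B} = X(\tau) e^{-\tau B} = P(\tau),
\end{equation*}
using that $X(T) = e^{TB}$ and that $e^{TB}$ commutes with $e^{-TB}$. This yields the factorization $X(\tau) = P(\tau)e^{\tau B}$ with $P$ invertible and $T$-periodic and $B$ constant with $X(T) = e^{TB}$, which is exactly the claim. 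The only real obstacle is the logarithm step, and in the complex-matrix setting this is routine; no further smallness, analyticity or parameter considerations are required since the statement is at the level of abstract linear ODEs.
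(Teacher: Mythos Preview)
Your argument is correct and is exactly the classical proof of Floquet's theorem. The paper itself does not give a proof of this statement---it is quoted as a standard result---but the subsequent discussion (``the matrix $B$ in Theorem \ref{Floquet.complex} is constructed considering the matrix fundamental solution \ldots\ and finding a constant coefficient matrix $B_j$ such that $e^{TB_j}=W_j(T)$'') makes clear that precisely your construction is intended: define $B$ as a logarithm of the monodromy $X(T)$ and set $P(\tau)=X(\tau)e^{-\tau B}$.
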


As a consequence, the dynamical system $\dot x = A(\tau) x$ is symplectically conjugated to the constant coefficient system $\dot v = B v$ through the time-periodic change of variables $P(\tau)$.

For $j=3,4$, the system \eqref{fiocco1bis} satisfies all the assumptions of Theorem \ref{Floquet.complex} and therefore we may reduce it to constant coefficients. From the proof of Floquet's theorem, it also follows that we can perform such reduction in a way that preserves the skew-Hermiticity of the matrix. In fact, the matrix $B$ in Theorem \ref{Floquet.complex} is constructed considering the matrix fundamental solution of
\begin{equation}
\begin{cases}
\dot W_j = A_j(\tau) W_j\\
W_j(0)=\mathbb 1
\end{cases}
\end{equation}
 and finding a constant coefficient matrix $B_j$ such that $e^{TB_j}=W_j(T)$. The key point is that the $2\times 2$ skew-Hermitian matrices form the Lie algebra $\mathfrak{su}(2)$ associated to the special unitary group $SU(2)$, which is compact and connected. This implies that $W_j(t)\in SU(2)$ for all $t$ and, in particular, $W_j(T)\in SU(2)$. Hence, using the fact that the exponential map is surjective on compact connected Lie groups, the matrix $B_j$ can be chosen so that $B_j\in\mathfrak{su}(2)$. Thus, the time-periodic change of variables $P_j(t):=W_j(t)e^{-tB_j}$ conjugates \eqref{fiocco1} to the constant coefficient dynamical system (with an abuse of notation, we denote the new variables again with $\mathtt z_j$)
\begin{equation}\label{cc1}
\dot \varphi=\e \laa_0 \,,\quad \dot{\mathtt z}_j = \e B_j(\xi) \mathtt z_j \,,\qquad B_j(\xi)=\ii \begin{pmatrix} a_j & \bar c_j\\ c_j & b_j\end{pmatrix}\in\mathfrak{su}(2)\ ,
\end{equation}
note that we have returned to the original time-scale.

Clearly, the change of variables $\overline{P_j(t)}:= \overline{W_j(t)} e^{-t \overline B_j}$ conjugates the last equation in \eqref{fiocco1bis} to
\begin{equation}
\dot{\bar{\mathtt z}}_j= \e\overline{B_j(\xi)} \bar{\mathtt z}_j\ .
\end{equation}
We now return to our original system \eqref{fiocco1}. We note that the fundamental solution $W(\tau)$ of \eqref{fiocco1} is 
$W(\tau) = e^{\ii j^2\e^{-1}\tau} X(\tau)$ so if we apply the change of variables  given by $P$ to \eqref{fiocco1} we get 
\begin{equation}\label{cc1bis}
\dot \varphi=\e \laa_0 \,,\quad \dot{\mathtt w}_j = (\ii j^2 +\e B_j(\xi) )\mathtt w_j \,.
\end{equation}
Adding to $y_0$ a correction quadratic in the $w_{\pm j }$ with $j=3,4$, we obtain a symplectic change of variables.
Thus,  the Hamiltonian becomes
\begin{equation}\label{morte}
\HH_{\mathtt {NLS}}=\muu(\xi)\cdot y + \sum_{j=3,4}\mathcal Q_{j}(\xi,w_j,w_{-j},\bar w_j, \bar w_{-j})+ \sum_{j\neq\pm1,\pm2,\pm3,\pm4} (j^2 + \varepsilon f_0(\xi)) |w_j|^2 + \mathcal R
\end{equation}
with
$$
\mathcal Q_{j}= j^2(|w_j|^2+|w_{-j}|^2)+\e\big\{a_j|w_j|^2 + b_{j}|w_j|^2+ \text{Re}( c_j w_j \bar w_{-j})\big\}.
$$
The  mass and momentum become
$$
\mathbb L= y_1+y_2 + y_3 + \sum_{j\neq \pm 1,\pm2} |w_j|^2\,,$$
$$
\mathbb M =y_1-y_2-2 y_3 +\sum_{j=3,4} j (|w_j|^2+ |w_{-j}|^2)+  \sum_{j\neq \pm 1,\pm2,\pm 3,\pm 4} j |w_j|^2
$$
since  the Floquet change of variables must preserve $ |w_j|^2+ |w_{-j}|^2$ for $j=3,4$.
By the classification of quadratic Hamiltonians, since $B_j(\xi)$ is skew-hermitian   there exists a  symplectic change of variables, depending on $\xi\in \cO$ smoothly,  where the $\mathcal Q_j$ are diagonal, i.e. 
 \begin{equation}\label{parto}
\HH_{\mathtt{NLS}}= \cN+ \mathcal R
\quad \cN= \muu(\xi)\cdot y +  \sum_{j\neq\pm1,\pm2,} \Omega_j |w_j|^2 
 \,,\quad \Omega_j= (j^2 + \varepsilon \Theta_j(\xi)) \,,
 \end{equation}
Where we have denoted by $\Theta_{\pm j}$ the eigenvalues of $-\ii  B_j$ for $j=3,4$ and for $|j|>4$ we have set $\Theta_j= f_0$.
Finally $\cR$ satisfies bounds of the form \eqref{prisma}.

 \subsection{Melnikov conditions}\label{melni}
 
 As in the paper \cite{PP2}, we study the action of the NLS normal form
 $$
 \cN :=  \muu(\xi)\cdot y +  \sum_{j\neq\pm1,\pm2,} \Omega_j(\xi) |w_j|^2
 $$
 by Poisson bracket on the space of regular analytic Hamiltonians  that are at most quadratic in the normal modes and commute with  mass and momentum.
 
 We only need to study the action of $\cN$ on monomials of the form 
 $$ e^{\ii\ell\cdot\psi} w^{\sigma'}_h w_k^\sigma$$ 
 where $\ell=(\ell_0,\ell_1,\ell_2,\ell_3)$, $\sigma\in\{0,1,-1\}$ and by definition
 $$
 w_k^\sigma:=
 \begin{cases}
 w_k \quad \text{if }\sigma=1\\
 \bar w_k \quad \text{if }\sigma=-1\\
 1 \quad \text{if }\sigma=0
 \end{cases}
 .
 $$
Mass and momentum conservation means that we  only need to consider monomials satisfying the following constraints:
\begin{equation}\label{mass.conservation}
\eta(\ell)+\s'+\sigma = 0
\end{equation}
\begin{equation}\label{momentum.conservation}
\pi(\ell)+\s'\fr(h)+\sigma\fr(k) = 0
\end{equation}
where
$$
\eta(\ell):= \ell_1 +\ell_2 +\ell_3
\,,\quad 
\pi(\ell):= \ell_1 -\ell_2 -2\ell_3
\,,\quad 
\fr(j):=
 \begin{cases}
 |j| \quad \text{if } j=3,4\\
 j \quad \text{if }|j|\geq5
 \end{cases}
$$

\begin{proposition}\label{melnikov}
For each $\ell,\s,\s',h,k$ satisfying \eqref{mass.conservation} and \eqref{momentum.conservation} the Melnikov resonance condition 
\begin{equation}\label{melnikov.res}
\muu\cdot \ell +\s \Omega_h+\s' \Omega_k=0
\end{equation}
defines a proper algebraic surface except in the trivial case $\ell=0$, $\s+\s'=0$, $h=k$.
\end{proposition}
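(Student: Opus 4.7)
The plan is to show that, in every non-trivial case, the analytic function
\[P(\xi) := \muu(\xi)\cdot \ell + \s\, \Omega_h(\xi) + \s'\, \Omega_k(\xi)\]
is not identically zero on $\cO_0$. Using Proposition \ref{formanorm}, I would decompose $P(\xi) = P_0 + \e P_1(\xi)$ with
\[P_0 := \ell_1 + \ell_2 + 4\ell_3 + \s h^2 + \s' k^2 \in \Z, \qquad P_1(\xi) := \ell\cdot\laa(\xi) + \s\,\Theta_h(\xi) + \s'\,\Theta_k(\xi).\]
If $P_0\neq 0$, then by the uniform bounds on $\laa$ and $\Theta_j$ in Proposition \ref{formanorm}$(iii)$ one has $|P(\xi)| \geq |P_0|/2$ for $\e$ small enough, so the resonance set is empty; otherwise $P = \e P_1$ and the task reduces to showing $P_1 \not\equiv 0$.

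For the case $\ell = 0$, I would use \eqref{mass.conservation}, \eqref{momentum.conservation} to force $\s' = -\s$ and $\fr(h) = \fr(k)$. Since $\fr$ maps $\{\pm 3, \pm 4\}$ into $\{3,4\}$ while $\fr(j)=j$ with $|j|\geq 5$ for the remaining sites, the two regions cannot match; hence either $h = k$ (the trivial case) or $h = -k \in \{\pm 3, \pm 4\}$, in which case $P_1(\xi) = \s\bigl(\Theta_h(\xi) - \Theta_{-h}(\xi)\bigr)$, which is not identically zero by the distinctness of the $\Theta_j$'s recorded in Proposition \ref{formanorm}$(ii)$.

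For $\ell \neq 0$ with $P_0 = 0$, I would exploit the fact that $\xi\mapsto \laa(\xi)$ is an analytically invertible map on $\cO_0$, so the 1-forms $d\laa_0,\dots,d\laa_3$ are everywhere linearly independent; consequently $d(\ell\cdot\laa) = \sum_{i=0}^3 \ell_i\, d\laa_i$ is not identically zero and $\ell\cdot\laa$ is a non-constant analytic function of $\xi$. The correction $\s\,\Theta_h + \s'\,\Theta_k$ is, up to sign and relabelling, one of only finitely many shapes drawn from the five-element list $\{f_0, \Theta_{\pm 3}, \Theta_{\pm 4}\}$. I would then rule out the identity $\ell\cdot\laa + \s\,\Theta_h + \s'\,\Theta_k \equiv 0$ by evaluating at a specific point (for instance the critical point $\xi_\star$ of Section \ref{quattro}) and comparing the explicit functional forms of $\laa$ (obtained from derivatives of $H^{\rm ap}_{6,0}$ and of the action-angle map of Section \ref{quattro}) with those of $f_0$ and of the eigenvalues of the Floquet matrices $B_j(\xi)$.

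The main obstacle is this last rigidity step, since $\ell\in\Z^4$ is arbitrary and one has to rule out infinitely many linear combinations on the left-hand side matching the finitely many shapes available on the right. The saving observation is that non-constancy of $\ell\cdot\laa$ is automatic from invertibility of $\laa$ whenever $\ell\neq 0$, so one needs only to check that each of the finitely many admissible shapes of $\s\,\Theta_h + \s'\,\Theta_k$ is incompatible with a non-constant $\ell\cdot\laa$. This amounts to a finite case-check for small $|\ell|$, complemented by a coarse bound for large $|\ell|$ where $|\ell\cdot\laa|$ dominates the uniformly bounded $\Theta$-correction and hence cannot cancel.
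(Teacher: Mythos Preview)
Your decomposition $P=P_0+\e P_1$ and the reduction to showing $P_1\not\equiv 0$ when $P_0=0$ match the paper's approach, and your treatment of the $\ell=0$ case is correct. The divergence is in how you handle $\ell\neq 0$.

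The paper does \emph{not} split into small and large $|\ell|$. Instead it evaluates everything at the single reference point $\xi=(0,4,0,2)$ (the limit $E\to 0$, $K\to K_\star$), where one computes explicitly
\[
\laa=(-144\sqrt 3,\,426,\,426,\,498),\quad f_0=558,\quad \Theta_{\pm 3}=558\pm 144,\quad \Theta_{\pm 4}=558\pm 18.
\]
The decisive observation you are missing is that at this point $\laa_0$ is \emph{irrational} while $\laa_1,\laa_2,\laa_3$ and every $\Theta_j$ are \emph{integers}. Hence the single scalar identity $\ell\cdot\laa+\s\Theta_h+\s'\Theta_k=0$, evaluated at this point, already forces $\ell_0=0$. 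With $\ell_0$ eliminated, mass conservation \eqref{mass.conservation} fixes $\ell_1+\ell_2+\ell_3$, and together with the evaluated identity one obtains an explicit $2\times 2$ linear system in $x=\ell_1+\ell_2$ and $y=\ell_3$ whose unique solution is non-integral in each of the finitely many cases (indexed by the $\Theta$-shape and by $\s+\s'$). This disposes of \emph{all} $\ell$ at once, with no dichotomy and no unspecified threshold.

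Your large-$|\ell|$ argument has a gap as stated: ``$|\ell\cdot\laa|$ dominates the bounded $\Theta$-correction'' is false pointwise, since $\ell\cdot\laa(\xi)$ vanishes on a hyperplane of the $\laa$-image no matter how large $|\ell|$ is. What is true, and would repair the step, is that the \emph{range} of $\ell\cdot\laa$ over $\cO_0$ has diameter $\gtrsim|\ell|$ (because $\laa(\cO_0)$ is open, hence contains a ball), while $|\s\Theta_h+\s'\Theta_k|\le 2M_0$ uniformly; so for $|\ell|$ large enough the range of $P_1$ is nontrivial and $P_1\not\equiv 0$. Even with this fix, you are still left with an unperformed finite check for $|\ell|$ below an unspecified threshold depending on the inner radius of $\laa(\cO_0)$, whereas the paper's irrationality trick makes the whole verification explicit and uniform in $\ell$.
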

\begin{proof}
It is sufficient to verify that none of these analytic functions are identically zero.

We first consider the second order Melnikov conditions, namely $\s, \s' \neq 0$.
Since these are all  affine functions of $\e$, these functions are identically zero if and only if
\begin{equation}\label{resonant.monomial}
\pi^{(2)}(\ell) + \s\fr^2(h)+\sigma'\fr^2(k) = 0 \,,\quad \laa\cdot \ell  + \sigma \Theta_h +\sigma' \Theta_k\equiv 0
\end{equation}
where 
$$
\pi^{(2)}(\ell) := \ell_1 + \ell_2 +4 \ell_3.
$$
In conclusion we have the system of equations
\begin{empheq}[left=\empheqlbrace]{align}
&\ell_1+\ell_2+\ell_3+\s +\s'=0  \label{pippa1} \\
&\ell_1-\ell_2-2\ell_3 +\s \fr(h)+\s' \fr (k)=0 \label{pippa2} \\ 
&\ell_1+\ell_2+4\ell_3 + \s\fr^2(h)+\sigma'\fr^2(k) = 0 \label{pippa3} \\
& \laa\cdot \ell  + \sigma \Theta_h +\sigma' \Theta_k=0 \label{pippa4}
\end{empheq}
\textit{Case 1}: $|h|,|k|>4$.\\
If $\s+\s'=0 $, in the last equation we get $\laa\cdot \ell=0$. Since $\xi\to\laa$ is generically a diffeomorphism this gives $\ell=0$, hence $h=k$ from \eqref{pippa2}.
If $\s+\s'=2$ we get $\laa\cdot \ell +2 f_0(\xi)=0$. In order to prove that this is not an identity we only need to compute the functions $\laa(\xi)$, $f_0(\xi)$ as the action $\xi_0=E \to 0$ and $(\xi_1,\xi_2,\xi_3)=K\to (4,0,2)$. It is easily seen that
$$\laa(0,4,0,2)= (\alpha^{(2)}(4,0,2),\partial_K \alpha^{(0)}(4,0,2))= (-144\sqrt{3},426,426,498)\,,$$
$$
\quad f_0(0,4,0,2)= f(1,4,0,2) =558$$
By the irrationality of $\lambda_0(0,4,0,2)$, we deduce $\ell_0=0$. Then we try to solve
\begin{equation}\label{miracle}
\laa(\xi)\cdot \ell +2 f_0(\xi)=0 \,,\quad \ell_1+\ell_2+\ell_3=-2
\end{equation}
at $\xi=(0,4,0,2)$ and one verifies that no integer solutions $\ell$ exist, since (set $\ell_1 + \ell_2 = x$, $\ell_3=y$) the solution of the linear system
$$
\begin{cases}
426x+498y=-1116\\
x+y=-2
\end{cases}
$$
is $x=5/3$, $y=-11/3$.
\\
\textit{Case 2}: $h,k\in\{\pm3,\pm 4\}$.\\
As above, we consider the limit $\xi_0=E \to 0$ and $(\xi_1,\xi_2,\xi_3)=K\to (4,0,2)$. We compute $\mathcal V_j (0,4,0,2)=0$ for $j=3,4$, and $\mathcal U_3(0,4,0,2)=144$, $\mathcal U_4(0,4,0,2)=18$. This gives $\Theta_{\pm 3} (0,4,0,2)= 558 \pm 144$, $\Theta_{\pm 4} (0,4,0,2)= 558 \pm 18$. Then, since $\laa_1, \laa_2, \laa_3, \Theta_h, \Theta_k$ are all integers while $\laa_0$ is irrational (here all functions, when not specified, are evaluated at $\xi=(0,4,0,2)$), equation \eqref{pippa4} gives $\ell_0=0$. First consider the case $\sigma + \sigma' =0$. Then equations \eqref{pippa1}, \eqref{pippa3} imply $3 \ell_3 + \s\fr^2(h)+\sigma'\fr^2(k) = 0$. Since $3$ does not divide $4^2-3^2=7$, we get $|h|=|k|\in\{3,4\}$, hence $\fr(h)=\fr(k)$. Equations \eqref{pippa1}, \eqref{pippa2}, \eqref{pippa3} then give $\ell=0$. The observation that $\Theta_j\neq\Theta_{-j}$ for $j=3,4$ implies that \eqref{pippa4} with $\ell=0$ is satisfied only if $h=k$. Now consider the case $\sigma+\sigma'=2$. If $|h|=|k|$, then equations \eqref{pippa1}, \eqref{pippa4} lead to the linear system
$$
\begin{cases}
426x+498y=- 1116 +  \rho\\
x+y=-2
\end{cases}
$$
with $\rho\in\{0, \pm 36, \pm 288\}$, which has no integer solution; indeed the first equation (divided by $3$) implies $x+y \equiv 0 \pmod{3}$, which is clearly incompatible with the second equation. If $|h|\neq|k|$, then equations \eqref{pippa1}, \eqref{pippa3} imply $3\ell_3 = -23$, so there is no integer solution.
\\
\textit{Case 3}: $|h|\in\{3,4\}$, $|k|>4$.\\
Like before, we evaluate everything at $\xi=(0,4,0,2)$; again, we use equation \eqref{pippa4} to deduce $\ell_0=0$. Consider the case $\sigma + \sigma' = 2$. Then equations \eqref{pippa1}, \eqref{pippa4} lead to the linear system
$$
\begin{cases}
426x+498y=- 1116 +  \varrho\\
x+y=-2
\end{cases}
$$
with $\varrho\in\Xi:=\{\pm 18, \pm 144\}$, which has no integer solution as above. Finally, if $\sigma + \sigma' = 0$, equations \eqref{pippa1}, \eqref{pippa4} imply $72\ell_3 = \varrho$, with $\varrho\in\Xi$. Now, if $\varrho = \pm 18$, there is no integer solution, which rules out the case $|h|=4$. Therefore we have $|h|=3$, $\ell_3=\pm 2$. Using equations \eqref{pippa1}, \eqref{pippa3}, this implies $\fr^2(k) \in \{\pm3, \pm 15\}$, which is not possible.

\bigskip

Then we consider first order Melnikov resonances of the type, without loss of generality $\s=-1,\s'=0$.
$$
\muu\cdot \ell - \Omega_h
$$
cannot identically vanish. Indeed, it is enough to consider the mass conservation $\ell_1+\ell_2+\ell_3=1$ and the resonance condition $\laa\cdot\ell = \Theta_h$. Reasoning like above, the evaluation at $\xi=(0,4,0,2)$ leads to the linear system
$$
\begin{cases}
426x+498y= 558 +  \varrho\\
x+y= 1
\end{cases}
$$
with $\varrho\in\Xi$, which again has no solution, since the first equation implies $x+y\equiv0\pmod3$.

\bigskip

The case $\s=\s'=0$ is completely trivial since the map $\omega \leftrightarrow \xi$ is a local diffeomorphism. This concludes the proof of the proposition.
\end{proof}
Consider the  Hamiltonian \eqref{parto} in the set $\cO$. Let
$M_0,L_0$ be defined by 
\begin{equation}\label{inizio}
|\laa |_\infty+ |\laa |^{\rm lip}, |f_0|_\infty + |f_0|^{\rm lip} ,\sup_{j=\pm3,\pm 4} |\Theta _j|_\infty+ | \Theta _j|^{\rm lip}\le M_0 \,,\quad |(\laa )^{-1}|^{\rm lip}\le L_0 
\end{equation}
where given a map  $f:\cO\to \R^d$ we set
$$
|f|_\infty:= \sup_{\xi\in\cO}\sup_{i=1,\ldots, d}|f_i|\,,\quad |f|^{\rm lip}:=  \sup_{\xi\neq \eta\in\cO}\sup_{i=1,\ldots, d}\frac{|f_i(\xi)-f_i(\eta)|}{|\xi-\eta|}.
$$
\begin{lemma}\label{melquant}
There exists an $\e$-independent  proper algebraic surface $\mathfrak A$  so that for all $\xi\in \cO\setminus \mathfrak A$ on has 
$$
\omega\cdot \ell +\s \Omega_h+\s' \Omega_k\neq 0 
$$
for all non-trivial $\ell,\s,\s',h,k$ satisfying  $|\ell|\le 4 M_0 L_0$ and conditions \eqref{mass.conservation},\eqref{momentum.conservation}.
Let  $\cO_0$ be an $\e$-independent  compact domain in $\cO\setminus \mathfrak A$. There exist  constants $\al_0$, $\mathtt R_0$ such that  
\begin{eqnarray} \label{stimm0}&
 |\laa \cdot \ell +\s \Theta _h +\sigma' \Theta _k|\ge \alpha_0 \quad \forall \sigma,\sigma'= 0,\pm 1, \;|\ell|< 4 M_0L_0\,, \mbox{\eqref{mass.conservation},\eqref{momentum.conservation} hold}\nonumber
\\ &
 |\cR|_{s,r,\OO_0}^{\gamma_0} \le  \gamma \e \mathtt R_0 \,,\quad \gamma_0= \gamma M_0^{-1}\,,
\end{eqnarray}
where $\cR$ is defined in \eqref{parto}, $\al_0$ does not depend on $\e$ while $\mathtt R_0\sim r +\e r^{-2}$.
\end{lemma}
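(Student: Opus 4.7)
The plan is to reduce the claim to a finite collection of Melnikov conditions, then invoke Proposition \ref{melnikov} on each and conclude by a compactness argument. The main technical point is that, although $(h,k)$ range over an infinite set, only finitely many distinct functions of $\xi$ actually arise among the Melnikov expressions.

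First I would exploit the structure of the frequency map $\xi \mapsto \Theta_j(\xi)$: since $\Theta_h = f_0$ whenever $|h| \geq 5$, the function $\laa \cdot \ell + \s \Theta_h + \s' \Theta_k$ depends on $(h,k)$ only through a finite list of ``types'' (elements of $\{\pm 3, \pm 4, \mathrm{large}\}$). Combined with $|\ell| < 4 M_0 L_0$ and $\s, \s' \in \{0, \pm 1\}$, this gives only finitely many distinct $\xi$-dependent Melnikov functions. As for the full expression $\muu \cdot \ell + \s \Omega_h + \s' \Omega_k$ (whose integer part $\ell_1 + \ell_2 + 4\ell_3 + \s h^2 + \s' k^2$ genuinely depends on $h,k$), I would observe that on the compact set $\cO$ the quantity $|\muu \cdot \ell|$ is uniformly bounded, so when $|\s h^2 + \s' k^2|$ exceeds this bound by a definite margin the condition is automatically nonzero. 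The remaining cases are governed by mass and momentum conservation \eqref{mass.conservation}, \eqref{momentum.conservation} and are finite in number.

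Next I would apply Proposition \ref{melnikov} to each of the finitely many surviving non-trivial Melnikov expressions: each is a non-identically-zero analytic (indeed algebraic) function of $\xi$, so its zero locus is a proper analytic subvariety of $\cO$. Taking $\mathfrak A$ to be the finite union of these subvarieties produces the proper algebraic surface claimed in the statement. On a compact $\cO_0 \subset \cO \setminus \mathfrak A$, each of these finitely many analytic functions is continuous and nowhere vanishing, hence uniformly bounded below by a strictly positive constant; I would define $\alpha_0$ as the smallest of these lower bounds, which gives the first inequality in \eqref{stimm0}.

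Finally, for the bound on $\cR$ I would trace through the sequence of symplectic transformations carried out in Section \ref{birkhof} and the present section: the polar coordinates \eqref{act1}, the affine change \eqref{act2}, the phase shifts \eqref{phase1}, \eqref{phase2}, and the Floquet reduction leading to \eqref{cc1bis}. Each of these is analytic and well-defined on a suitable domain, with $\e$-independent or controlled $\e$-dependent constants, so the estimate \eqref{prisma} of the form $|\cR|^1_{s,r,\cO_0} \le \mathtt{C}(\e r + \e^2 r^{-2})$ propagates uniformly on the compact set $\cO_0$. Changing the Lipschitz weight from $1$ to $\gamma_0 = \gamma/M_0$ multiplies only the Lipschitz part of the norm by $\gamma_0$, yielding $|\cR|^{\gamma_0}_{s,r,\cO_0} \le \gamma \e \mathtt R_0$ with $\mathtt R_0 \sim r + \e r^{-2}$, as claimed. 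The hard part of the whole argument is the finite-reduction step described in the second paragraph; once that is secured, the rest is a routine compactness-plus-continuity argument combined with bookkeeping for the remainder.
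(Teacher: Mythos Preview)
Your proposal is correct and follows essentially the same route as the paper. The paper's proof is terser but uses the identical mechanism: if the integer part $\pi^{(2)}(\ell)+\s\fr^2(h)+\s'\fr^2(k)$ is nonzero then the Melnikov expression is automatically $\ge 1/2$ for $\e$ small, while if it vanishes then (together with $|\ell|\le 4M_0L_0$ and momentum conservation) one is forced into finitely many cases, to each of which Proposition~\ref{melnikov} applies; the uniform lower bound $\alpha_0$ and the remainder estimate then follow by compactness and by the already-established bound \eqref{prisma}. Your additional observation that the $\Theta_j$ take only finitely many distinct functional values is a clean way to phrase the finiteness for the $\laa\cdot\ell+\s\Theta_h+\s'\Theta_k$ estimate, but it is the same reduction.
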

\begin{proof}
By Proposition \ref{melnikov} each Melnikov resonance defines a proper algebraic surface. Our first statement follows by showing that the condition $|\ell|\le 4 M_0L_0$ implies: 
1. that the only  Melnikov resonance surfaces  which may intersect $\cO$ are such that
\begin{equation}\label{pipi}
\pi^{(2)}(\ell) + \s\fr^2(h)+\sigma'\fr^2(k) = 0 
\end{equation} see \eqref{resonant.monomial};
 2. that such resonances are only a finite number. This is due to the smallness of $\e$.   Indeed if \eqref{pipi} does not hold, we have
 $$ |\laa \cdot \ell + \s \Theta _i +\sigma' \Theta _j|\ge  |\pi^{(2)}(\ell) + \s\fr^2(h)+\sigma'\fr^2(k)| - \e (4M_0^2L_0 + 2 M_0) \ge 1-\frac12.
 $$
Then we notice that \eqref{pipi} fixes $h,k$ inside a ball of radius $4M_0L_0$ except in the trivial case $\ell=0,\s=-\sigma', h=k$.
The estmates \eqref{stimm0} follow trivially.
\end{proof}
\subsection{Proof of Proposition \ref{formanorm}}\label{prova1}
We  just apply all the changes of variables discussed in the previous sections.First we apply  the Birkhoff change of variables generated by the Hamiltonian in \eqref{birk1}, then we pass the sites $(\pm 1,\pm 2)$ to polar coordinates  in \eqref{act1} and pass to the coordinates \eqref{act2}. Next we apply our first phase shift \eqref{phase1} and then we integrate $\mathcalligra H\;$  in \eqref{integro}. Now the  change of variables \eqref{parametri}  sets our approximately invariant torus at $y=0$, $z=0$. The NLS Hamiltonian now has the form \eqref{quasi}, and the normal form depends only on one angle. In order to remove this dependence we apply the phase shift \eqref{phase2}  and then Floquet's theorem \ref{Floquet.complex}. We obtain the Hamiltonian \eqref{morte}, which we diagonalize by using the standard theory of quadratic Hamiltonians. We choose $\cO_0$ to be a compact domain as in Lemma \ref{melquant}. Finally, formula \eqref{battimenti davvero} is obtained as a consequence of \eqref{quantobatte}, by possibly further restricting the set $\cO_0$.
\qed
\section{KAM theorem}\label{sec:KAM}
\subsection{Technical set-up} 
We introduce a degree decomposition on  $\cA_{s,r}$  by associating to each monomial $\mathfrak  m$ a degree $d(\mathfrak m)$ as follows
$$\mathfrak m_{\ell,j,\a,\bt}= e^{i \ell\cdot \varphi}y^j z^\al \bar z^\bt\quad  \to \quad d(\mathfrak m_{\ell,j,\a,\bt})= 2j+|\al|+|\bt|-2.$$ 

 This gives us a definition of homogeneous polynomials of degree $d$ and a degree decomposition of analytic functions. Given  $f\in \cA_{s,r}$ we denote by $f^{(d)}$ its projection onto the homogeneous polynomials of degree $d$
 $$
 f= \sum f_{\ell,j,\a,\bt}e^{i \ell\cdot \varphi}y^j z^\al \bar z^\bt \,,\quad \Pi^{d}f\equiv f^{(d)}= \sum_{2j+|\al|+|\bt|-2=d} f_{\ell,j,\a,\bt}e^{i \ell\cdot \varphi}y^j z^\al \bar z^\bt\,,
 $$
We use the same notations for $\Pi^{\le d}$ and $\Pi^{\ge d}$. We are also interested in projections onto trigonometric polynomials in $\varphi$, we shall denote
$$ \Pi_N f:=  \sum_{|\ell|\le N} f_{\ell,j,\a,\bt}e^{i \ell\cdot \varphi}y^j z^\al \bar z^\bt \,,\quad
\Pi_N^\perp:= \mathbb 1 - \Pi_N $$
More in general given a subset of indices $\mathbb I\subseteq \Z^4\times\N^4\times \N^\Z\times \N^\Z$  we define
\begin{equation}\label{momo}
\Pi_{\mathbb I}f= \sum_{(\ell,j,\al,\bt)\in \mathbb I} f_{\ell,j,\a,\bt}e^{i \ell\cdot \varphi}y^j z^\al \bar z^\bt.
\end{equation}
  We denote as usual by $\{A,B\}$ the associated Poisson bracket and, if we want to stress the role of one of the two variables, we also write $ad(A)$ for the linear operator $B\mapsto \{A,B\}$.\footnote{$ad$ stands for {\em adjoint} in the language of Lie algebras.}
  The main properties of the majorant norm \eqref{weno}, contained in  \cite{BBP1}, Lemmata  2.10,\ 2.15,\  2.17, express the compatibility of the norm with projections and Poisson brackets, we briefly recall them in the next propositions.
 \begin{proposition}\label{riassunto}
 For every $r, s>0$ the following holds true:
 \begin{itemize}
 \item[(i)] All projections  $\Pi_\mathbb I$ are continuous namely
 $$
  |\Pi_\mathbb I h|^\gamma_{s,r}\leq |h|^\gamma_{s,r}
 $$
  {\em Smoothing}: one has
 $$
 |\Pi_{\leq N} h|^\gamma_{s,r}\leq N^{{s_1}} | h|^\gamma_{s-s_1, r}\,,\quad |\Pi_{\geq N} h|^\gamma_{s,r}\leq N^{-s_1} | h|^\gamma_{s+s_1,r}
 $$
 \item [(ii)]{\em Graded Poisson algebra}:
 Given $f,g\in \cA_{s,r}$ for any $r'<r$ one has
 $$
 |\{f,g\}|^\gamma_{s, r'}\leq (1-\frac{r'}{r})^{-1} C(s) |f|^\gamma_{s,r}|g|^\gamma_{s,r}
 $$
 moreover on all monomials $d(\{f,g\})= d(f)+d(g)$.
 \item[(iii)]{\em partial ordering} if we have
 $$
 |f_{\al,\bt,\ell}|\leq |h_{\al,\bt,\ell}| \,,\quad \forall \; \al,\bt,\ell
 $$
 and $h_{\al,\bt,\ell}$ is the Taylor-Fourier expansion of a function $h\in \cA_{s,r}$ then there exists a unique function $f$ whose Fourier expansion is
 $\{f_{\al,\bt,\ell}\}$ and such that
 $$ |f|^\gamma_{s,r}\leq |h|^\gamma_{s,r}$$
 \item[(iv)] {\em Degree decomposition}
 Given a  Hamiltonian $h\in \cA_{s,r}$ which is homogeneous of degree $d$ then  $h\in \cA_{s,R}$ for all $R$ and  one has 
 $$
 |h|^\gamma_{s,r} \leq r^d |h|^\gamma_{s,1}
 $$ 
 \end{itemize}
 \end{proposition}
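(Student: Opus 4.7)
The plan is to reduce each item to a statement about the absolute values of the Taylor--Fourier coefficients of the associated vector field, which is where the majorant norm $|X_H|^\gamma_{s,r}$ lives by construction. Since the norm is defined through non-negative coefficients together with analytic weights $e^{s|\nu|}$ in the angles and with the radii $r,r^2$ in the action and normal directions, every estimate follows by a coefficient-by-coefficient argument combined with a summation controlled by those weights.

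For item $(i)$, the continuity of $\Pi_{\mathbb{I}}$ is immediate: since the majorant $MX$ has non-negative entries and the projection onto any subset of indices suppresses terms, the pointwise inequality $M(\Pi_{\mathbb{I}}X) \le MX$ on $D(s,r)$ gives $|\Pi_{\mathbb{I}} h|^\gamma_{s,r} \le |h|^\gamma_{s,r}$. For the smoothing estimates I would compare the weights $e^{s|\nu|}$ on both sides: on the low-mode projection $\Pi_{\le N}$ the ratio $e^{s_1|\nu|}$ is bounded by the declared factor, whereas on the tail projection $\Pi_{\ge N}$ one factor of the exponential is used to extract the gain $N^{-s_1}$. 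The Lipschitz seminorm in \eqref{weno} is handled term by term by the same arithmetic.

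Item $(ii)$ is the main technical point. I would split the Poisson bracket into its elementary contributions
\[
\{f,g\} = \partial_y f \cdot \partial_\varphi g - \partial_\varphi f \cdot \partial_y g + \ii \sum_k \bigl( \partial_{z_k} f \, \partial_{\bar z_k} g - \partial_{\bar z_k} f \, \partial_{z_k} g \bigr),
\]
and first check the graded identity $d(\{f,g\}) = d(f) + d(g)$ on monomials: with the assignments $d(y)=0$, $d(z) = d(\bar z) = -1$ and $d(\varphi)=d(1)=-2$ inherited from the convention $d = 2|j|+|\alpha|+|\beta|-2$, each of the four pairings has the required degree shift (e.g.\ $\{z,\bar z\}=\ii$ of degree $-2 = -1 + (-1)$ and $\{y,\varphi\}=1$ of degree $-2=0+(-2)$). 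For the quantitative estimate I would pass to majorants: the pointwise product of two majorants is controlled by the product of their norms up to a Cauchy-type loss $(1-r'/r)^{-1}$ paid by the discrete derivatives $\partial_y$ and $\partial_{z},\partial_{\bar z}$, which shrink the corresponding radii. Packaging these bounds together yields $|\{f,g\}|^\gamma_{s,r'} \le (1-r'/r)^{-1} C(s) |f|^\gamma_{s,r} |g|^\gamma_{s,r}$, and the Lipschitz contribution in $\xi$ is obtained by differentiating and repeating the argument.

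Item $(iii)$ is a direct consequence of the monotone dependence of $|\cdot|^\gamma_{s,r}$ on the absolute values of the coefficients: if $|f_{\alpha,\beta,\ell}| \le |h_{\alpha,\beta,\ell}|$ termwise, then $MX_f \le MX_h$ coefficient-wise, so $f$ is well defined on $D(s,r)$ and its norm is controlled. Finally, item $(iv)$ is a scaling argument: if $h$ is homogeneous of degree $d$, the rescaling $(\varphi,y,z,\bar z) \mapsto (\varphi,r^2 y, r z, r \bar z)$, which maps $D(s,1)$ onto $D(s,r)$, multiplies every monomial in the majorant by $r^d$ according to the degree grading and, combined with the compensating rescaling of the norm weights $1/r^2$ and $1/r$ in \eqref{normaEsr}, produces $|h|^\gamma_{s,r} \le r^d |h|^\gamma_{s,1}$. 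I expect $(ii)$ to be the only genuinely delicate point, because it ties the analytic loss $(1-r'/r)^{-1}$ together with preservation of the graded structure; the other items reduce to coefficient bookkeeping.
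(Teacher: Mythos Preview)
The paper does not give its own proof of this proposition: it is simply recalled from \cite{BBP1} (Lemmata~2.10, 2.15, 2.17), so there is no in-paper argument to compare against. Your sketch is correct and is precisely the standard majorant-norm argument one finds in \cite{BBP1}: monotonicity of the majorant under coefficient suppression for (i) and (iii), Cauchy-type derivative estimates combined with the algebra property of majorants for (ii), and the homogeneity rescaling $(y,z,\bar z)\mapsto(r^2 y,rz,r\bar z)$ for (iv). One small point worth flagging: in the smoothing estimate the weight comparison you describe produces factors $e^{\pm s_1 N}$, not the $N^{\pm s_1}$ written in the statement; this is a typographical slip in the paper, and your argument gives the correct exponential factors.
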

 \begin{proposition}\label{riassunto2}
 For every $r, s>0$ the following holds true:
 \begin{itemize}
 \item[(i)] {\em Changes of variables}:
 if $(1-\frac{r'}{r})^{-1} |f|^\gamma_{s,r}<\varrho$ sufficiently small then its Hamiltonian vector field $X_f$ defines a close to identity canonical change of variables
 $\cT_f$ such that:
 $$
 h\circ\cT_f= e^{\{\cdot,\cdot\}} h \quad{\rm satisfies}\quad |h\circ\cT_f|^\gamma_{s, r'} \leq (1+C\varrho)|h|^\gamma_{s,r}
 $$
 \item[(ii)]{\em Remainder estimates}: consider  two Hamiltonians $f,g$ with $f$  of minimal  degree $\td_f$ and $g$ of minimal degree  $\td_g$, then set
 \begin{equation}\label{taylor}
 \re{\ti}(f,h)=\sum_{l=\ti}^\infty  \frac{(-\ad f)^l}{l!} h\,,\quad \ad(f)h:= \{h,f\}
 \end{equation} 
 then  $\cP\!_{\ti}(f,g)$ is of minimal degree $\td_f \ti   +\td_g$ 
 and we have the bounds
 $$
 \abs{\re{\ti}(f,h)}^\gamma_{s,r'} \leq C(s) \left(1-\frac{r'}{r}\right)^{-\ti} (|f|_{s,r}^\gamma)^\ti |g|^\gamma_{s,r}.
 $$
 Note that the same holds if we substitute in \eqref{taylor}  the sequence $\{\frac{1}{l!}\}$  with any sequence $\{b_l\}$ such that  $\forall l$ one has  $|b_l|\leq \frac{1}{l!}$.
 \end{itemize}
 \end{proposition}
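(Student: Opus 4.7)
\textbf{Proposal for Proposition \ref{riassunto2}.} The plan is to deduce both parts from the graded Poisson bracket bound in Proposition \ref{riassunto}(ii) via a standard Cauchy-type analysis of the Lie series $\sum_{l}\frac{(-\ad f)^l}{l!}g$. Part $(i)$ is essentially part $(ii)$ with $\ti=0$ (together with a check that the Hamiltonian flow $\cT_f$ is well-defined), so I would prove $(ii)$ first and then read off $(i)$.

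For part $(ii)$, I would introduce a telescope of radii $\rho_i := r - i(r-r')/l$ for $i=0,\ldots,l$, so that $\rho_0=r$, $\rho_l=r'$, and each consecutive ratio satisfies $(1-\rho_{i+1}/\rho_i)^{-1}\leq l\,r/(r-r')\leq l(1-r'/r)^{-1}$. Applying Proposition \ref{riassunto}(ii) at each step, with $|f|^\gamma_{s,\rho_i}\leq |f|^\gamma_{s,r}$ throughout (monotonicity of the majorant norm in the radius), a straightforward induction on $i$ gives
\[
\abs{(\ad f)^l g}^\gamma_{s,r'}\leq\Bigl(\frac{C(s)\,l}{1-r'/r}\Bigr)^{l}\bigl(|f|^\gamma_{s,r}\bigr)^{l}\,|g|^\gamma_{s,r}.
\]
Dividing by $l!$ and using Stirling's bound $l^l/l!=O(e^l)$, the tail $\sum_{l\geq\ti}$ becomes a geometric series in $l-\ti$ provided $eC(s)|f|^\gamma_{s,r}/(1-r'/r)$ is small, and is dominated by its first term up to a multiplicative constant. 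This yields the claimed bound $C(s)(1-r'/r)^{-\ti}(|f|^\gamma_{s,r})^{\ti}|g|^\gamma_{s,r}$. The minimal-degree statement is immediate from Proposition \ref{riassunto}(ii): each bracket adds $\td_f$ to the minimal degree, so $(\ad f)^l g$ has minimal degree $l\,\td_f+\td_g\geq\ti\,\td_f+\td_g$ for every $l\geq\ti$. The extension to an arbitrary sequence $\{b_l\}$ with $|b_l|\leq 1/l!$ follows termwise from the partial-ordering statement Proposition \ref{riassunto}(iii), applied to the Fourier-Taylor coefficients.

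For part $(i)$, I use that the time-$1$ flow $\cT_f$ of $X_f$ acts on observables through the Lie series $h\circ\cT_f=\re{0}(f,h)=h+\re{1}(f,h)$. Splitting off the leading term $h$ and applying part $(ii)$ with $\ti=1$ to the remainder gives $\abs{\re{1}(f,h)}^\gamma_{s,r'}\leq C\varrho\,|h|^\gamma_{s,r}$, whence $\abs{h\circ\cT_f}^\gamma_{s,r'}\leq(1+C\varrho)|h|^\gamma_{s,r}$. The only preliminary point is well-definedness of $\cT_f$ as a close-to-identity map $D(s,r')\to D(s,r)$ under the smallness hypothesis; this is obtained by majorant integration of the ODE $\dot{\mathtt v}=X_f(\mathtt v)$, relying on the calibration of weights in \eqref{normaEsr} (precisely the property flagged in the footnote there) so that a vector field with $|X_f|^\gamma_{s,r}<\varrho$ cannot drive the trajectory out of $D(s,r)$ in unit time.

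The main technical nuisance I anticipate is the bookkeeping of the nested-radii telescoping — in particular, keeping track of how the constant $C(s)$ from the bracket estimate interacts with its $l$-th power and verifying that $l^l/l!$ is absorbed by the factorial denominator uniformly in $\ti$. Conceptually there is no genuine obstacle: everything reduces to the graded Poisson-algebra bound of Proposition \ref{riassunto}(ii) combined with the Fourier-Taylor monotonicity of Proposition \ref{riassunto}(iii), so the argument is standard Cauchy-majorant analysis in the spirit of \cite{BBP1}.
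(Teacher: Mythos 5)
The paper never proves this proposition: it is quoted from \cite{BBP1} (Lemmata 2.10, 2.15, 2.17), so there is no internal argument to compare against. Your proposal is the standard Lie--series/majorant proof that those lemmata are based on, and it is essentially correct: iterate the graded Poisson-bracket bound of Proposition \ref{riassunto}(ii) along a telescope of radii, control $l^l/l!$ by $e^l$, sum the tail as a geometric series under the (implicit) smallness of $(1-r'/r)^{-1}|f|^\gamma_{s,r}$, get the degree statement from $d(\{f,g\})=d(f)+d(g)$, handle general sequences $\{b_l\}$ by the partial ordering (iii), and deduce item (i) from item (ii) with $\ti=1$ together with well-posedness of the time-one flow, which is exactly what the weight calibration in \eqref{normaEsr} is designed for.

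One step needs repair: the monotonicity $|f|^\gamma_{s,\rho_i}\le|f|^\gamma_{s,r}$ for $\rho_i<r$ is false in general, because the weights $r^{-2},r^{-1}$ in \eqref{normaEsr} grow as the radius shrinks; for instance a purely angle-dependent monomial (degree $-2$) has vector-field norm scaling like $r^{-2}$, which \emph{increases} when the radius decreases. Since every monomial has degree $\ge -2$, item (iv) of Proposition \ref{riassunto} (applied coefficient-wise, using (iii)) gives instead $|f|^\gamma_{s,\rho}\le (r/\rho)^{2}|f|^\gamma_{s,r}$, so along your telescope the loss is at most $(r/r')^{2}$ per bracket, i.e.\ $(r/r')^{2l}$ after $l$ steps. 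In the regime where the estimate is used ($r'$ close to $r$, cf.\ \eqref{srK}, and in any case $r'\ge r/2$) this is at most $4^{l}$ and is absorbed into the same smallness threshold that makes your geometric series converge, so the conclusion survives with a slightly worse constant; but as written the step is an overstatement and should be replaced by the scaling bound above.
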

Note that  $\mathcal H$ defined in \eqref{NLS1} is $M$-regular\footnote{it is well known that the NLS is  locally well posed under much weaker regularity conditions. This is not the purpose of the present paper.} for all $r$.
\subsection{The iterative  algorithm} 
%
%
%
%

\begin{definition}
We first define the diagonal Hamiltonians which are our {\em normal forms}. 
$$
\mathcal F_{\rm ker}:= \{h= q\cdot y + \sum_{j\neq \pm1,\pm 2} Q_j|z_j|^2\,,\quad \mbox{with}\; q\in \R^4\,,\;Q_j\in \R\} \,, 
$$
Given an $M$-analytic Hamiltonian $H$ we denote by $\Pi_{\rm ker} H$ the projection of $H$ onto $\mathcal F_{\rm ker}$ (this is a projection of the form \eqref{momo}).
Now we define
$
\cF_{s,r}
$ be the subspace of $M$-analytic Hamiltonians $H$  such that $H-\Pi_{\rm ker} H\in \cA_{s,r}$ is a regular M-analytic Hamiltonian and Poisson commutes with $\mathbb L,\mathbb M$. Finally let 
$$\cF_{\rm rg}= (\mathbb 1-\Pi_{\rm ker})\Pi^{\le 0} \cF_{s,r}.$$
so that
\begin{equation}
\cF_{s,r} = \mathcal F_{{\rm ker}}\oplus  \mathcal F_{{\rm rg}}\oplus \mathcal F_{s,r}^{>0}.
\end{equation}
Note that $\cF_{s,r}$ is naturally decomposed in terms of subspaces of growing degree.
\end{definition}
By construction the NLS    Hamiltonian
 \begin{equation}
\HH_{\mathtt{NLS}}= \omega(\xi)\cdot y +  \sum_{j\neq\pm1,\pm2,} \Omega_j(\xi) |w_j|^2 + \mathcal R= \mathcal N +\cR \in \cF_{s,r} \,,\quad \forall r<r_0,s<s_0
 \end{equation}
\begin{remark}[The goal]\label{theg}
By definition, the normal form $\mathcal N$  is in $\mathcal F_{{\rm ker}}$.
In general, the condition for  a Hamiltonian $  H=  N+  P,\   N=\Pi_{{\rm ker}}   H$ to have  KAM tori is $\Pi_{{\rm rg}}   H=\Pi_{{\rm rg}}   P=0$. So our goal is to find a
  symplectic transformation $  \Psii_\infty$ so that $$ \Pi_{{\rm rg}}      (  H)=0. $$\end{remark}
The strategy is to construct this as a limit of   a {\em quadratic Nash-Moser algorithm}.  

We will  denote:  
\begin{equation}\label{notazioni}
  N:=\Pi_{{\rm ker}}(  H)\,,\quad 
  P_{{\rm rg}}:=\Pi_{{\rm rg}}(  H)\,,\quad   P^{>0}:=\Pi^{>0}(  H)\,,
\end{equation}
so that $ H=   N+   P_{{\rm rg}} +  P^{>0}$.
 \smallskip

By the Poisson algebra property (ii) of Proposition \ref{riassunto},
if  $A$ has degree $i>0$  then also $ad(A)$  has  positive degree and hence is strictly lower triangular on  $\mathcal F_{s,r}$ w.r.t.  the degree decomposition.
 
\bigskip 
 
We start  with  the NLS Hamiltonian \eqref{parto} which we denote by $  H_{0}:=   N_0 +   P_{{{\rm rg}}, 0} +   P^{>0}_0$, where $  N_0=\mathcal N$,  $ P_{{{\rm rg}}, 0} +   P^{>0}_0=\cR $, so that  $$   P_{{\rm rg}, 0}\sim \e^2\,,\quad P^{>0}_0\sim \e r$$  are appropriately small. 

We wish to find a convergent sequence of changes of variables
\begin{equation}\label{meroda}
\Psii_{m+1}:= e^{ad (F_{m })} \circ   \Psii_{m },
\end{equation}   dependent on a sequence $K_m$ of ultraviolet cuts, so that at each step $  H_{m+1}=   \Psii_{m+1}(  H_{0})= N_{m+1} + P_{{\rm rg}, m+1}^{\le 0} + P_{m+1}^{>0}$ 
is such that $   N_{m }$ stays close to $\mathcal N$,  $  P^{>0}_m$ stays bounded while $   P_{{\rm rg}, m}$ converges to zero (super--exponentially). 

At a purely formal level we would like that  $   P_{{\rm rg}, m+1}$ is 
{\em quadratic} w.r.t. $   P_{{\rm rg}, m }$.
The generating function $F_{m}\in \mathcal F_{{\rm rg},\leq K_{m+1}}:= \Pi_{\leq K_{m+1}}\mathcal F_{\rm rg} $ is fixed by solving the homological equation
 \begin{equation}\label{hoeq}
\{   N_{m},F_{m}\}+ \Pi_{{\rm rg},m} \{P^{>0}_{m},F_{m}\}=\Pi_{\leq K_{m+1}} P_{{\rm rg},m}\,,\quad  \Pi_{{{\rm rg}},m}:= \Pi_{{{\rm rg}},\leq K_{m+1}}
\end{equation}
which uniquely determines $F_{m}$ as a {\em linear} function of $  P_{{\rm rg},m}$ provided that the linear operator:
$$\mathfrak L_{m}:= {\rm ad}(  N_{m}) + \Pi_{{{\rm rg}},m} {\rm ad}(P^{>0}_{m})= {\rm ad}(  N_{m}) + \Pi_{{{\rm rg}},m} {\rm ad}(P^{1}_{m})+ \Pi_{{\rm rg},m} {\rm ad}(P^{2}_{m}) , $$ is invertible on $\mathcal F_{{\rm rg},\leq K_{m+1}}$ (clearly we also need some quantitative control on the inverse).   
\begin{remark}\label{nilp}
On $\mathcal F_{s,r}$ the operators 
$ {\rm ad}(  N_{m}) , \Pi_{{\rm rg},m} {\rm ad}(P^{1}_{m}), \Pi_{{\rm rg},m} {\rm ad}(P^{2}_{m})$ have respectively degree 0,1,2  so it should be be clear that $\mathfrak L_{m}$ is invertible if and only if ${\rm ad}(  N_{m})$ is invertible and in this case  one inverts $$\mathfrak L_{m}=  {\rm ad}(  N_{m}) \bigg(1+{\rm ad}(  N_{m}) ^{-1} \Pi_{{\rm rg},m} {\rm ad}(P^{>0}_{m})\bigg)$$ by inverting the second factor. This is  of the form $1+A$ with $A$ a sum of two linear operators of degree 1,2 respectively, so $A^3=0$ and we invert $1+A$ with the 3 term Neumann series $1-A+A^2$.
\end{remark} 
We now justify our choice  by computing one {\em KAM step}, for notational convenience we drop the pedex $m$ in $  H_{m }$ etc..  and substitute $  H_{m+1}$ with $  H_+$ etc...\,.

Let us compute $  H_+:= e^{ad (F)}  H$.  
First split the operator  $e^{ad(F)}=1+ad(F)+E_F$, by definition $E_F$ is quadratic in $F$ and hence quadratic in $  P_{{\rm rg}}$. Regarding the term
$$(1+ad(F))(  N+   P_{{\rm rg}}+  P^{> 0})=   N+   P_{{\rm rg}}+  P^{> 0}-\{  N+  P^{> 0},F\} +\{F,  P_{{\rm rg}}\} $$
we first notice that, since $F$ is linear w.r.t. $  P_{{\rm rg}}$  then the last summand is quadratic moreover since $F$ solves the homological equation we have
$$
  P_{{\rm rg}}-\{  N+  P^{> 0},F\}= (\Pi_{{\rm ker}}+\Pi_{>0}+ \Pi_{>K}\Pi_{{\rm rg}})\{  P^{> 0},F\}+\Pi_{>K} P_{{\rm rg}}.
$$
Then we deduce that
$$
\Pi_{{\rm ker}} e^{ad( F)}  H:=  N_{+}=   N+\Pi_{{\rm ker}}\{  P^{> 0},F\}+\Pi_{{\rm ker}}Q(  P_{{\rm rg}})\,,$$
  $$ \Pi_{{\rm rg}} e^{ad (F)}  H:=  P^{\leq 0}_{+}= \Pi_{>K} (P_{{\rm rg}}+ \Pi_{{\rm rg}}\{  P^{> 0},F\} )+\Pi_{{\rm rg}}Q(  P_{{\rm rg}})\,,
$$
$$
 \Pi_{>0} e^{ad( F)}  H:=   P^{>0}_+=   P^{> 0}+\Pi_{>0}\{  P^{> 0},F\}+\Pi_{>0}Q(  P_{{\rm rg}})
$$
where $Q(  P_{\rm rg})\,,$  is quadratic in $  P_{{\rm rg}}$ and collects the terms from $E_F(  H)$ and $\{F,  P_{{\rm rg}}\} $.

We now introduce some parameters which control $H_m$. Recall that by definition of $\cF_{\rm ker}$
$$
N_m= \muu^{(m)}\cdot y + \sum_{j\neq \pm 1,\pm 2} \Omega_j^{(m)}|w_j|^2,
$$
and set
$$
\muu^{(m)}= (0,1,1,4) + \e \laa^{(m)}\,,\quad \Omega_j^{(m)}= j^2+\e \Theta_j^{(m)}.
$$
 Fix a small $\gamma>0$. Let $\OO_m$ be a positive measure Cantor set, assume that $\xi\to \laa^{(m)}(\xi)$ is invertible and let $L_m,M_m,\mathtt R_m,\alpha_m\geq 0$  be  such that
\begin{eqnarray} \label{stimm}&
 |\laa^{(m)}|+ |\laa^{(m)}|^{\rm lip}\le M_m\,,\quad  |\Theta^{(m)}_j|+ | \Theta^{(m)}_j|^{\rm lip}\le M_m \,,\quad |(\laa^{(m)})^{-1}|^{\rm lip}\le L_m \nonumber
 \\
 &|\laa^{(m)}\cdot \ell + \Theta^{(m)}_i +\sigma \Theta^{(m)}_j|\ge \alpha_m \quad \forall \sigma= 0,\pm 1, \;\forall (\ell,i,j)\neq (0,i,i): |\ell|< 4 M_0L_0\nonumber
\\ &
 |H_m-N_m|_{s,r,\OO_m}^{\gamma_m} \le  \gamma \e \mathtt R_m \,,\quad \gamma_m= \gamma M_m^{-1}.
\end{eqnarray}
Note that our conditions are fulfilled by the Hamiltonian \eqref{parto} for $m=0$.
 \begin{definition}\label{telpa}
We say that a positive parameter $b=\{b_m\}_{m\in \N}$ is telescopic if for each step $m$ we have $b_0/2< b_m< \frac32 b_0$ (usually $b_m$ is either an increasing or a decreasing sequence). 
\end{definition}
Note that a sufficient condition is that
\begin{equation}\label{tele}
\sum_{m=0}^\infty |b_{m+1}-b_m|\leq \frac{b_0}{2}.
\end{equation}

We choose as in \cite{PP16}:
\begin{equation}\label{srK}
 r_{m+1}= (1-2^{-m-3})r_m\,,\quad s_{m+1}= (1-2^{-m-3})s_m\,,\quad K_m=4^m K_0 .
\end{equation}
We choose the set $\mathcal O_{m+1}$ as
\begin{align}\label{meln}
\!\!\!\mathcal O_{m+1}:=&\{\xi\in \OO_m: \; |\muu^{(m)}\cdot \ell +\s \Omega^{(m)}_i+\s' \Omega^{(m)}_j|\geq {\e\gamma}K_m^{-\tau} ,\; \s,\s'=0,\pm1 ,\;  |\ell|\le K_m, \\
& \quad \eta(\ell)+\sigma+\sigma'=0\,, \quad \pi(\ell) + \sigma\mathtt r(i) + \sigma'\mathtt r(j)=0\,, \quad (\sigma\sigma',\ell,i)\neq(-1,0,j) \}. \nonumber
\end{align}
\begin{lemma}\label{friotta}
For all    $\xi\in\mathcal O_{m+1}$, \eqref{hoeq} admits a unique solution and moreover 
\begin{equation}\label{stF}
\|F_{m}\|^{\gamma_m}_{s'_m,r'_m,\OO_{m+1}} \leq (\e\gamma)^{-1}(\mathtt R_m K_m^{2\tau})^3\|P_{{\rm rg},m}\|^{\gamma_m}_{s_m,r_m,\OO_{m}}\,,\end{equation}  
where 
$s'_m= (s_m+s_{m+1})/2$,$r'_m= (r_m+r_{m+1})/2$ , and  
 $\gamma_m= \gamma M_m^{-1}$.
\end{lemma}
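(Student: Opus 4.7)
\medskip

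\noindent\emph{Proof plan for Lemma \ref{friotta}.} The approach is precisely the one sketched in Remark \ref{nilp}: factor the operator as
$$\mathfrak L_m = \mathrm{ad}(N_m)\bigl(\mathbb 1 + A\bigr), \qquad A := \mathrm{ad}(N_m)^{-1}\,\Pi_{{\rm rg},m}\,\mathrm{ad}(P^{>0}_m),$$
invert $\mathrm{ad}(N_m)$ via the small-divisor estimates provided by the Melnikov set $\OO_{m+1}$, and then invert $\mathbb 1 + A$ via a Neumann series that terminates after three terms thanks to a degree argument.

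\textbf{First step.} I would invert $\mathrm{ad}(N_m)$ on $\cF_{\rm rg,\le K_{m+1}}$ by diagonal action in the Fourier--Taylor basis. On a monomial $e^{\ii\ell\cdot\varphi}y^j z^\a \bar z^\bt$, the operator $\mathrm{ad}(N_m)$ acts by $\ii(\muu^{(m)}\cdot\ell + \sum_k(\a_k-\bt_k)\Omega_k^{(m)})$. Since the input lives in $\cF_{\rm rg}$, which is the degree-$\le 0$, non-kernel, mass/momentum-preserving part of $\cF_{s,r}$, the exponents must satisfy $2j+|\a|+|\bt|\le 2$; combined with mass/momentum conservation this forces the divisor to take exactly the form $\muu^{(m)}\cdot\ell + \s\Omega^{(m)}_h + \s'\Omega^{(m)}_k$ with $\s,\s'\in\{0,\pm 1\}$ covered by the definition \eqref{meln} of $\OO_{m+1}$. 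Hence on $\OO_{m+1}$ each such divisor is bounded below by $\e\gamma K_m^{-\tau}$. Using the standard Lipschitz calculus for small divisors (the derivative in $\xi$ produces the divisor squared), together with the bounds \eqref{stimm} and the choice $\gamma_m=\gamma M_m^{-1}$ to absorb the Lipschitz constant of $\muu^{(m)}$, one gets by Proposition \ref{riassunto}(iii), applied monomial-by-monomial to the majorant,
$$\bigl|\mathrm{ad}(N_m)^{-1}\Pi_{\leq K_{m+1}}h\bigr|^{\gamma_m}_{s'_m,r'_m,\OO_{m+1}} \le C(\e\gamma)^{-1} K_{m+1}^{2\tau}\, |h|^{\gamma_m}_{s_m,r_m,\OO_m}.$$

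\textbf{Second step.} I would establish the nilpotency $A^3=0$ on $\cF_{\rm rg,\le K_{m+1}}$. Since $P^{>0}_m$ is supported in degrees $\ge 1$, the operator $\mathrm{ad}(P^{>0}_m)$ strictly increases degree (by $1$ or $2$), while $\Pi_{{\rm rg},m}$ and $\mathrm{ad}(N_m)^{-1}$ preserve it and cut off the degree at $0$. Degrees in $\cF_{\rm rg}$ range over $\{-2,-1,0\}$, so after three applications of $A$ the image lands in degree $\ge 1$ and is killed by $\Pi_{{\rm rg},m}$. Consequently $\mathfrak L_m^{-1}=(\mathbb 1-A+A^2)\,\mathrm{ad}(N_m)^{-1}$, which gives both existence and uniqueness of $F_m$.

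\textbf{Third step.} I would quantify $A$ via the tame Poisson estimate in Proposition \ref{riassunto}(ii). Combining the first-step bound with the hypothesis $|P^{>0}_m|^{\gamma_m}_{s_m,r_m,\OO_m}\le |H_m-N_m|^{\gamma_m}\le \gamma\e\mathtt R_m$ and a slight contraction in the analyticity radii from $s_m,r_m$ to $s'_m,r'_m$ yields
$$\|A\| \;\le\; C\,(\e\gamma)^{-1}K_{m+1}^{2\tau}\cdot \gamma\e\mathtt R_m \;\lesssim\; \mathtt R_m K_m^{2\tau},$$
so $\|\mathbb 1 - A + A^2\|\lesssim (\mathtt R_m K_m^{2\tau})^2$ in the regime where $\|A\|\gtrsim 1$ (otherwise the bound is even better and can be absorbed into a generic constant). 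Composing with the first-step bound on $\mathrm{ad}(N_m)^{-1}$ gives exactly \eqref{stF}.

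\textbf{Main obstacle.} The delicate point is the Lipschitz control of $\mathrm{ad}(N_m)^{-1}$: differentiating the small divisor with respect to $\xi$ costs one extra factor of the inverse divisor, producing $K_{m+1}^{2\tau}$ rather than the naive $K_{m+1}^\tau$. The choice $\gamma_m=\gamma M_m^{-1}$, together with the telescopic control in Definition \ref{telpa} on $M_m$, is precisely what is needed to make this Lipschitz loss uniform in $m$ along the iteration; otherwise the rapid growth of $M_m$ with $m$ would break the convergence of the Nash--Moser scheme in subsequent steps.
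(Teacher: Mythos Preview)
Your proposal is correct and follows essentially the same route as the paper's proof: invert $\mathrm{ad}(N_m)$ on $\cF_{{\rm rg},\le K_{m+1}}$ using the small-divisor bounds from \eqref{meln} together with Proposition \ref{riassunto}(iii) (the paper obtains $|\mathfrak D^{-1}g|^{\gamma_m}\le 3(\e\gamma)^{-1}K_m^{2\tau}|g|^{\gamma_m}$), then set $A=\mathrm{ad}(N_m)^{-1}\Pi_{{\rm rg},m}\,\mathrm{ad}(P^{>0}_m)$, use the degree argument of Remark \ref{nilp} to get $A^3=0$, and bound $|A^j g|$ for $j=1,2$ via \eqref{stimm} and Propositions \ref{riassunto}--\ref{riassunto2}. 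Your write-up is in fact more explicit than the paper's (which defers details to \cite{PP16,CFP}), in particular on the Lipschitz loss producing $K_m^{2\tau}$ rather than $K_m^{\tau}$ and on why the degree range $\{-2,-1,0\}$ of $\cF_{\rm rg}$ forces the Neumann series to terminate after three terms.
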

\begin{proof}
This is a subcase of  Lemma 14 of \cite{PP16} or Proposition 2.31 of \cite{CFP}. 
We first notice that \eqref{meln} and item (iii) of Proposition \ref{riassunto} imply that for any $g\in \cF_{{\rm rg},m}$ one has
$$
|\mathfrak D^{-1} g |^{\gamma_m}_{s,r,\OO_{m+1}}\le 3(\e \gamma)^{-1}  K_m^{2\tau}  |g |^{\gamma_m}_{s,r,\OO_{m+1}}.
$$
Following   Remark \ref{nilp} we set $A= \mathfrak D^{-1}\Pi_{{\rm rg},m} {\rm ad}(P_m^{>0})$.  Using \eqref{stimm} and Propositions \ref{riassunto},\ref{riassunto2} we get  
$$| A^j g|_{s',r',\OO_{m+1}}^{\gamma_m} \le (K_m^{2\tau}\mathtt R_m )^j   |g |^{\gamma_m}_{s,r,\OO_{m+1}}\,,\quad j=1,2. $$
The thesis follows by Remark \ref{nilp}.
\end{proof}

\begin{proposition}\label{kamforma}
For $K_0,\tau$ large and for all $\e,r$ such that $r +\e r^{-2} \lessdot \gamma K_0^{-6\tau}$, at each step $m$, in the set $\mathcal O_m$, one has the estimate 
\begin{equation}\label{superex}
\|P_{{\rm rg},m}\|^{\gamma_m}_{s_m,r_m,\OO_{m}} \leq  (r +\e r^{-2}) e^{-\frac32^m}\,.
\end{equation}  Moreover  the constants $\mathtt R,L,M,\alpha$ are telescopic. Finally our algebraic algorithm converges on the set $\cap_m\mathcal O_m$, and we obtain a change of variables $\Psii_\infty:=\lim_{m\to \infty}\Psii_m$. The corresponding Hamiltonian $H_\infty$, has reducible KAM tori.
\end{proposition}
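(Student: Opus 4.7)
The plan is to prove by induction on $m$ that $H_m = N_m + P_{{\rm rg},m} + P^{>0}_m$ satisfies \eqref{stimm} on $\OO_m$ with constants $M_m, L_m, \alpha_m^{-1}, \mathtt R_m$ telescopic in the sense of Definition \ref{telpa}, together with the super-exponential estimate \eqref{superex}. The base step $m=0$ follows from Lemma \ref{melquant} upon choosing $\mathtt R_0 \sim r+\e r^{-2}$. For the inductive step I would apply Lemma \ref{friotta} on the Cantor set $\OO_{m+1}$ defined in \eqref{meln} to obtain the generating function $F_m \in \cF_{{\rm rg},\le K_{m+1}}$ solving the homological equation \eqref{hoeq} and satisfying \eqref{stF}, then define $\Psii_{m+1} := e^{\ad(F_m)} \circ \Psii_m$.

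Using the explicit decomposition displayed just before the proposition, the new range term $P_{{\rm rg},m+1}$ is the sum of the ultraviolet tail $\Pi_{>K_{m+1}} P_{{\rm rg},m}$, the high-frequency correction $\Pi_{>K_{m+1}}\Pi_{\rm rg}\{P^{>0}_m, F_m\}$, and the quadratic remainder $\Pi_{\rm rg} Q(P_{{\rm rg},m})$. The smoothing estimate in Proposition \ref{riassunto}(i), together with the analytic weight in \eqref{scale}, bounds the first two terms by $e^{-(s_m-s'_m)K_{m+1}}$ multiplied respectively by $\|P_{{\rm rg},m}\|^{\gamma_m}_{s_m,r_m}$ and, via Proposition \ref{riassunto}(ii) and \eqref{stF}, by $(\e\gamma)^{-1}(\mathtt R_m K_m^{2\tau})^3 \|P_{{\rm rg},m}\|^{\gamma_m}_{s_m,r_m}$. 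The quadratic remainder is controlled by Proposition \ref{riassunto2}(ii) combined with \eqref{stF} by $(\e\gamma)^{-2}(\mathtt R_m K_m^{2\tau})^6 (\|P_{{\rm rg},m}\|^{\gamma_m}_{s_m,r_m})^2$. Plugging the inductive bound into these three estimates, together with $K_m = 4^m K_0$, all contributions are crushed below $(r+\e r^{-2})e^{-(3/2)^{m+1}}$ provided $\tau$ and $K_0$ are taken large and $r+\e r^{-2}\lesssim \gamma K_0^{-6\tau}$, which closes \eqref{superex}.

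The telescopic control of the parameters is then immediate: $N_{m+1} - N_m = \Pi_{\rm ker}(\{P^{>0}_m,F_m\} + Q(P_{{\rm rg},m}))$ is bounded by $\|F_m\|^{\gamma_m}(\mathtt R_m + \|P_{{\rm rg},m}\|^{\gamma_m})$, which by \eqref{stF} and \eqref{superex} is $O(e^{-(3/2)^m})$ and hence summable. This yields $|\laa^{(m+1)} - \laa^{(m)}|, |\Theta_j^{(m+1)} - \Theta_j^{(m)}| \lesssim e^{-(3/2)^m}$; by a standard perturbation argument the invertibility of $\laa^{(m)}$ is preserved, and the second-order Melnikov denominators in the definition \eqref{meln} of $\OO_{m+1}$ decrease by at most $\tfrac12 \e\gamma K_m^{-\tau}$ at each step, so that $\alpha_m \ge \alpha_0/2$ uniformly in $m$. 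The same summability gives convergence of $\Psii_m$ on $D(s_0/4, r_0/4)$ to an analytic symplectic map $\Psii_\infty$ with $\|\Psii_\infty - \mathrm{id}\| \lesssim \e^{1/3}\gamma^{-1}$, as required by Proposition \ref{KAM}(i).

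The limit Hamiltonian $H_\infty = N_\infty + P^{>0}_\infty$ has $P^{>0}_\infty$ vanishing to second order at $y=w=0$ by construction (since $\Pi_{\rm rg} H_m \to 0$ at every degree), so $\{y=0,w=0\}$ is an invariant torus of frequency $\muu^\infty(\xi)$ on which the linearized flow is diagonal in $w$ with coefficients $\Omega_j^\infty$, which is precisely the notion of reducibility of Definition \ref{reducible}. The main obstacle throughout the scheme is the tension between the single inverse-small-denominator factor $(\e\gamma)^{-1}$ coming from \eqref{stF} and the third-power polynomial loss $(\mathtt R_m K_m^{2\tau})^3$: matching these against the exponential decay of the ultraviolet smoothing is what forces the lower bound $\gamma \gtrsim \e^{1/3}$ hidden in the constant $\mathtt C_\star$, and demands the careful telescopic tracking of $\mathtt R_m, \alpha_m, M_m, L_m$ at every step.
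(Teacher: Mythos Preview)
Your proof is correct and follows exactly the same inductive strategy as the paper's own (very terse) proof, which simply invokes Lemma \ref{friotta} and Propositions \ref{riassunto}--\ref{riassunto2} and declares the estimates ``standard''; you have merely fleshed out the three-term decomposition of $P_{{\rm rg},m+1}$ and the telescopic bookkeeping that the paper leaves implicit. One small slip: the exponential smoothing factor $e^{-(s_m-s'_m)K_{m+1}}$ comes from the analyticity weight $e^{s|\nu|}$ in the majorant norm \eqref{normadueA}, not from the sequence-space weight \eqref{scale}, but this is a citation misfire that does not affect the argument.
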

\begin{proof} These estimates are  all standard. The condition on $\e,r$ implies that  $P_0\sim \e r +  \e^2 r^{-2}$ is small w.r.t. the size of the divisor $\sim \e \gamma$.  
By induction, suppose we have reached some step $m$ and proved the estimates. Using \eqref{stF} and  items (i-ii) of Proposition \ref{riassunto2}, one proves  that  $F_{m }$ defines a symplectic change of variables with  $H_{m+1}:= e^{{\rm ad}F_m} H_m$  well defined in the domain $D(s_{m+1},r_{m+1})$.
The corrections of the parameters  $\mathtt R,L,M,\alpha$ are obtained by item (i) Proposition \ref{riassunto}  and item (ii) of Proposition \ref{riassunto2} using the fact that the coordinate change is very close to the identity due to the super--exponential decay of the norm of $F$. This implies easily the telescopic  nature of the parameters used,  as $|b_{m+1}-b_m|<$ const $b_0 e^{-(3/2)^m}$.  
%
Then  the algorithm converges.
\end{proof}

\subsection{Proof of Proposition \ref{KAM}.}
The change of variables of item (i) is $\Psii_\infty:=\lim_{m\to \infty} \Psii_m$ defined  in \eqref{meroda} and with $F_m$ defined in \eqref{hoeq}. The estimates on $\Psii_\infty-\mathbb 1$ follow from Lemma \ref{friotta}, Proposition \ref{kamforma} and Proposition \ref{riassunto2},  provided that we choose $r=\e^{1/3}$.\\
Item (ii) follows directly from Proposition \ref{kamforma} and Definition \ref{reducible}.

In order to complete the proof of the KAM theorem we only need to show that the set $\OO_\infty:=\cap_{m=0}^\infty\OO_m$ has positive measure.
Let us denote
$$
\mathfrak R_{\s,\s',\ell,i,j}^{(m)}:=\{\xi\in \OO_{m}:  |\muu^{(m)}\cdot \ell +\s \Omega^{(m)}_i+\s' \Omega^{(m)}_j|< {\e\gamma}K_m^{-\tau}\}
$$
so that
$$
\OO_{m+1}= \OO_m\setminus \cup_{\s,\s',\ell,i,j}^*\mathfrak R_{\s,\s',\ell,i,j}^{(m)}
$$
where the $\cup^*$ is the union of all the sets such that
$$
|\ell|<K_m\,,\quad \eta(\ell)+\sigma+\sigma'=0\,, \quad \pi(\ell) + \sigma\mathtt r(i) + \sigma'\mathtt r(j)=0\,, \quad (\sigma\sigma',\ell,i)\neq(-1,0,j).
$$

%
%
We first consider the case $\sigma\sigma'\neq0$, corresponding to \emph{second Melnikov conditions}.
We claim that for any fixed $\bar\ell\in\Z^4$, the union $\cup^*_{\s,\s',\bar\ell,i,j} $ is finite, with an upper bound on the cardinality depending on $|\bar\ell|$.

By the estimates \eqref{stimm} and the fact that $\alpha_m$ is telescopic we deduce that, if we choose $K_0$ so that $\alpha_0> 2 K_0^{-\tau}$, we have $\mathfrak R_{\s,\s',\ell,i,j}^{(m)}=\emptyset$ for all $|\ell|\le 6 L_0M_0$.

Denote $\mathtt v:= (0,1,1,4)$. If $|\s i^2+\s' j^2|> 2|\mathtt v| |\ell| $   we have $\mathfrak R_{\s,\s',\ell,i,j}^{(m)}=\emptyset$. Since $$|\s i^2+\s' j^2|> \big||i|-|j|\big|(|i|+|j|),$$ this condition ensures  that $\cup^*$ runs only over $$|i|,|j|\le {\rm const }|\ell| $$  unless one has $\s =-\s'$ and $|i|=|j|$.

In this last case we use  momentum conservation, if $i=j$ we get $\ell=0$  and this is explicitly excluded in our $\cup^*$. If $i=-j$ with $i=\pm 3,\pm 4$  again we get $\ell=0$ so by our previous argument $\mathfrak R_{\s,\s',\ell,i,j}^{(m)}=\emptyset$. Finally if $i=-j\neq \pm 3,\pm 4$ we obtain
$$
2 |i| \le |\pi(\ell)|\,,
$$
which obviously implies $|i|\le$ const $|\ell|$.
Let us now give a bound for the measure of a single set $\mathfrak R_{\s,\s',\ell,i,j}^{(m)}$.

By construction all the maps $\xi\to \laa^{(m)}$ are invertible  with Lipschitz inverse, so we change the variables to $\laa\in \laa^{(m)}(\OO_m)$, note that  this set is contained in $[-2 M_0,2 M_0]^4$. We have
$$
|\mathfrak R_{\s,\s',\ell,i,j}^{(m)}| \le L_m^4 \big|\{ \laa\in \laa^{(m)}(\OO_m):\quad |(\mathtt v+\e \laa)\cdot \ell +\s \tilde\Omega^{(m)}_i(\laa)+\s' \tilde\Omega^{(m)}_j(\laa)|< {\e\gamma}K_m^{-\tau} \}\big|
$$
We note that $|\tilde\Omega^{(m)}_i(\laa)|^{\rm lip}\le \e L_m M_m\le \e  |\ell|/3$, since by hypothesis $|\ell|>6M_0L_0$.
Now we can introduce an orthogonal  basis for $\R^4$ where the first basis vector is parallel to $\ell$, in this basis $$|\partial_{\laa_1}(\mathtt v+\e \laa) \cdot \ell|= \e |\ell| ,$$   so we may  estimate the measure of the resonant set as 
$$|\mathfrak R_{\s,\s',\ell,i,j}^{(m)}| \le {\rm const}\,\gamma L_0(L_0M_0)^3 |\ell|^{-1}K_m^{-\tau}$$
Hence, we estimate
\begin{multline*}
\left| \cup_m \cup^*_{\sigma\sigma'\neq0} \mathfrak R_{\s,\s',\ell,i,j}^{(m)} \right| \le \left|\bigcup_m \bigcup_{|i|\le c|\ell| ,\,|\ell|\le K_m,\, \s\s'\neq0\atop \s \mathtt r(i)+\s'\mathtt r(j)=\pi(\ell)} \mathfrak R_{\s,\s',\ell,i,j}^{(m)} \right| \\ \le {\rm const}\,\gamma L_0(L_0M_0)^3 \sum_{m=0}^\infty K_m^{-\tau +4} \le {\rm const}\,\gamma L_0(L_0M_0)^3
\end{multline*}
provided that $\tau>4$.

In the remaining cases with $\s\s'=0$ (Diophantine condition on $\laa^{(m)}$ and \emph{first Melnikov conditions}), one similiarly obtains the corresponding estimate
$$
\left| \cup_m \cup^*_{\sigma\sigma'=0} \mathfrak R_{\s,\s',\ell,i,j}^{(m)}\right|  \le {\rm const}\,\gamma L_0(L_0M_0)^3
$$
(actually, when $\s\s'=0$, this estimate is simpler to obtain and completely standard).

Hence
$$
|\OO_0\setminus \OO_\infty| \le \left|\cup_m \cup^* \mathfrak R_{\s,\s',\ell,i,j}^{(m)}\right|  \le {\rm const}\,\gamma L_0(L_0M_0)^3.$$

{\footnotesize

}

This research was supported by the European Research Council under
FP7, project ``Hamiltonian PDEs and small divisor problem: a dynamical systems approach'' (HamPDEs).
The second author was also supported by Programme STAR, financed by UniNA and Compagnia di San Paolo.
\end{document}